\documentclass{article}

\usepackage{
amsfonts,
amsmath,
amsopn,
amssymb,
amsthm,
authblk,
bbm,
bbold,
comment,
dsfont,
enumitem,
graphicx,
mathrsfs,
mathtools,
mathabx,
%refcheck,
%showkeys,
soul,
subfig,
verbatim,
xcolor,
xspace,
latexsym,
}

\usepackage[pagewise]{lineno}
%\linenumbers

\newtheorem{theorem}{Theorem}[section]

\newtheorem{lemma}[theorem]{Lemma}
\newtheorem{remark}[theorem]{Remark}
\newtheorem{proposition}[theorem]{Proposition}

\bibliographystyle{20227282529423.bst}

\usepackage[numbers]{natbib}

\newcommand{\R}{\mathbb R}

\newcommand{\G}{\mathcal G}

\newcommand{\NLS}{\rm{NLS}}
\newcommand{\I}{\mathcal I}
\newcommand{\E}{\mathcal E}
\newcommand{\wy}{\widetilde{y}}
\newcommand{\mH}{\mathcal {H}}
\newcommand{\wmH}{\widetilde{\mathcal{H}}}
\newcommand{\whG}{\widehat {\G}}
\newcommand{\wi}{\widetilde{\i}}
\newcommand{\mK}{\mathcal {K}}

\begin{document}

\title{Ground States for the NLS on  non-compact graphs with an attractive potential}

\author[1]{Riccardo Adami \thanks{\tt riccardo.adami@polito.it} }
\author[1]{Ivan Gallo \thanks{\tt ivan.gallo@polito.it}  }

\author[2,3]{David Spitzkopf \thanks{\tt  spitzkopf98@gmail.com}  }

\affil[1]{Dipartimento di Scienze Matematiche “G.L. Lagrange”, Politecnico di Torino
Corso Duca degli Abruzzi, 24, 

10129 Torino, Italy}
\affil[2]{Nuclear Physics Institute, Czech Academy of Sciences, Hlavní 130, 25068 Řež near Prague, Czech Republic }
\affil[3]{Faculty of Mathematics and Physics, Charles University, V Hole\v{s}ovi\v{c}k\'ach 2, 18000 Prague \\ Czech Republic}

\date{ 23 April 2025}

\maketitle

\begin{abstract}
    We consider the subcritical nonlinear Schrödinger equation on non-compact quantum graphs with an attractive potential supported in the compact core, and investigate the existence and the nonexistence of Ground States, defined
    as minimizers of the energy at fixed $L^2$-norm, or mass. We finally reach
    the following picture: for small and large mass there are Ground States. Moreover, according
    to the metric features of the compact core of the graph and to the strength of the 
    potential, there may be an interval of intermediate masses for which there are no Ground States. The study was inspired by the research on quantum waveguides, in which the curvature of a thin tube induces an effective attractive potential.

\end{abstract}

\section{Introduction}

The nonlinear Schrödinger (NLS) equation  is currently used to model the dynamics of several physical systems. 
In particular, it provides an effective description of the evolution of the wave function of a Bose-Einstein condensate (BEC), namely an ultracold system made of many identical bosons in a peculiar phase \cite{Gross1961,pitaevskistringari,
Pitaevskii1961}.
When extended to graphs, the NLS equation can describe evolutionary phenomena in networks, with applications to various contexts in science and engineering  \cite{amico2021roadmap, PhysRevB.84.155304, gnutzmann2011stationary, noja2014,peccianti2008escaping}. In particular,
it describes the behaviour of particles traveling through branched structures of quantum wires in the presence of
nonlinearities.

Here we investigate the existence of ground states for the energy functional
\begin{equation}
    E(u, \mathcal{G}) = \frac{1}{2} \int_{\mathcal{G}} |u'(x)|^2 dx - \frac{1}{p}\int_{\mathcal{G}} |u(x)|^p dx - \frac 1 2 \int_{\mathcal{K}} w(x) |u(x)|^2 dx, \qquad
2 <p<6,    \label{nlsc}
\end{equation}
on a metric graph $\mathcal{G}$ with at least an unbounded edge, i.e. a halfline, under the {mass constraint}
\begin{equation}
    \|u\|^2_{L^2(\mathcal{G})} = \mu ,
    \nonumber
\end{equation}
which means that we seek solutions in the mass constrained space 
\begin{equation}
    H^1_{\mu}(\mathcal{G}) := \{u \in H^1(\mathcal{G}), \, \|u\|^2_{L^2(\mathcal{G})}  = \mu \}.
    \label{h1}
\end{equation}

In Eq \eqref{nlsc} the symbol $\mathcal K$ denotes a compact subset of $\G$ that supports
the action of the continuous potential $w \geq 0$. Notice that with such a choice on the sign of $w$, 
the contribution to Eq \eqref{nlsc}
of the potential term is nonpositive and, since it is supported on a compact set, it goes to zero at infinity and so, roughly speaking,  can be considered as attractive.  For simplicity, we identify $\mathcal K$ with the compact core of $\G$, namely the subgraph made of all bounded edges of $\G$. The meaning of the integrals and of the norms in Eqs \eqref{nlsc} and \eqref{h1} are made precise
in Section \ref{sec:preliminaries}.

The energy functional \eqref{nlsc} is obtained from
the standard NLS energy 
\begin{equation}
     E_{\NLS}(u, \mathcal{G}) = \frac{1}{2} \int_\G
     |u'|^2 \, dx  - \frac{1}{p} \int_\G
     |u|^p \, dx
     \label{functional}
\end{equation}
by adding a term that describes the effect of an external potential $w$. Throughout the paper the potential is chosen as nonnegative, continuous, and, as already mentioned, supported on the compact core $\mathcal{K}$ of the graph $\mathcal{G}$.
%attractive, bounded, and, as already mentioned, supported on a compact subset $\mathcal{K}$ of the graph $\mathcal G$.
The limitation on $p$ means that we 
restrict our analysis to the $L^2$-subcritical case. It is well-known that for $p > 6$ the
constrained functional $E (\cdot, \G)$ is not lower bounded, while the case $p=6$ is more delicate and deserves to be treated separately \cite{cacciapuoti2017}.
Several investigations on the existence of the ground states of (\ref{functional}) on graphs were conducted in recent years, starting from the analysis on
star graphs \cite{acfn10,acfn12} and later extended to general graphs \cite{Ast15,Ast16,ast17, adst19}. Moreover, research has focused on the search for stationary states \cite{SABIROV_2013,Noja_2015} and other special solutions \cite{Sobirov_2010, Kairzhan_2022, Pierotti_Soave_Verzini_2021, dovetta2024normalized}. 
The starting point of the analysis of the NLS on metric graphs is the existence of ground states at every mass for the functional $E_{\NLS}(\cdot , \mathbb{R})$, given by the soliton
\begin{equation}
    \phi_{\mu}(x) = \mu^{\alpha} C_p \, {\rm sech}^{\frac{\alpha}{\beta}}(c_p \mu^{\beta} x)
    \label{soliton} ,
\end{equation}
where $c_p$ and $C_p$ are positive constants and the powers $\alpha$ and $\beta$ are
\begin{equation}
    \label{alfabeta}
    \alpha = \frac{2}{6-p}, \qquad
    \beta = \frac{p-2}{6-p}.
\end{equation}

Up to translations and multiplication by a constant phase,
the function $\phi_\mu$ is the unique
Ground State at mass $\mu$ of $E_{\NLS}$ on the line
\cite{shabat1972exact,berestycki1983nonlinear}.

Let us  recall 
an existence criterion 
singled out in Cor. 3.4 of  \cite{Ast15} for a noncompact graph: \\

\smallskip

\noindent
\textbf{Existence criterion.} \textit{Fix $2 < p < 6$ and let $\mathcal{G}$ be a graph with $n$ infinite edges, $n \geq 1$. If there exists $v \in H^1_{\mu}(\mathcal{G})$ such that }
\begin{equation}
  E_{\NLS}(v, \mathcal{G})  \leq E_{\NLS}(\phi_{\mu}, \mathbb{R}),
  \label{criterion1}
\end{equation}
\textit{then there exists a Ground State for $E_{\NLS}$ on $\mathcal{G}$ at mass $\mu$.} \\

\smallskip \noindent

The criterion provides a conceptually simple method to prove the existence of a Ground State for graphs with an infinite edge: it is sufficient to exhibit a function whose
energy lies below the energy of the soliton. However, in practice the use of such a criterion can be quite cumbersome as the construction of such a function may be non-trivial.  

Furthermore, the converse of the existence criterion holds too, i.e. if a Ground State $v$ at mass $\mu$ exists, then it satisfies Eq \eqref{criterion1}.
The reason is that the soliton Eq \eqref{soliton} can be approximated arbitrarily well by functions supported on the infinite edge, so  the infimum of the energy at a given mass, and therefore energy of a possible ground state, cannot exceed the quantity $E_{\NLS} (\R)$ (Theorem 2.2 in \cite{Ast15}). This remains true for the energy functional \eqref{nlsc}, where the presence of an attractive potential further lowers the infimum of the constrained energy.

A preliminary result of the present paper consists in the extension
of the existence criterion to 
the case with the potential $w$ (Lemma \ref{genex}). We exploit then such generalization
to prove two results of existence and one of nonexistence of ground states.

The first is Theorem \ref{large mass}, that
establishes
that if one concentrates a soliton in a region
where the potential gives a negative
contribution, then the existence criterion is fulfilled and a Ground State exists. Of course, this can be done if the mass is fixed as large enough, in order to allow to confine the soliton in such a region.

On the other side of the range of the mass, the content of Theorem \ref{small mass}, namely the existence of ground states for small mass, is not new, as it
was already proved in \cite{Noja_2017} for every attractive potential.
Such ground states were proven to arise
as a nonlinear bifurcation  from  linear ground states. Here we give a different proof based on the extension of the
existence criterion
to the functional $E(u, \mathcal{G})$ (Lemma \ref{genex}). 

The last achievement, proven in Theorem \ref{th:nonex}, is a nonexistence result that holds for a class
of graphs and some interval of masses, under
the hypothesis of a weak potential. To our
knowledge this is the first nonexistence
theorem for an attractive potential on graphs,
except for the case of a Dirac's delta potential,
and generalizes a nonexistence result in \cite{Ast15} that holds in the absence of potentials.
Owing to Theorems \ref{large mass} and \ref{small mass} the picture we get is the following:
in the presence of a negative potential there are two mass thresholds $\mu_\star \leq \mu^\star$. Existence is guaranteed
below $\mu_\star$ and above $\mu^\star$. Between the two, there
may be an interval of masses in which ground states do not exist. The construction of explicit examples in which the nonexistence occurs is not straightforward, and we exhibit one such example in Section 5,
inspired by the analogous result 
proven in \cite{Ast16}. A more detailed investigation on nonexistence 
is in order, requiring a more thorough analytical and numerical effort.

Besides the case of an external potential,
our analysis applies to the case of a potential induced by the curvature of the graph \cite{DEx95}. 
More specifically, we recall that
for a waveguide modeled as a curved tube $\Omega_{\Gamma}$ around a curve $\Gamma$, one can express the Laplacian
with Dirichlet boundary conditions by using a transformation from the ordinary cartesian coordinates to a system centred in a straightened tube $\Omega_0$. This 
procedure results in  an effective operator
with a potential that depends on the signed curvature $\gamma (x) $, where $x$ is the longitudinal coordinate that coincides with the arclength,
i.e.,
\begin{equation}
    V(x,\xi) := -\frac{\gamma^2 (x)}{4(1+\xi \gamma (x))^2} + \frac{\xi {\gamma'' (x)}}{2(1+\xi\gamma (x))^3} + \frac{5}{4} \frac{\xi^2 {\gamma' (x)}}{(1+\xi \gamma (x))^4},
    \nonumber
\end{equation}
where  $\xi$ is the transverse coordinate.
In the thin-waveguide limit $\xi \to 0$ one has $V(x) \approx - \frac{\gamma^2 (x)}{4}$, and the transversal
contribution remains present in the effective potential for finite $\xi$. When we try to rigorously apply this limit to branched networks, the main source of issues is the behaviour of eigenfunctions  
%in and locally 
around the vertices (see Sec. 8.3--8.5 in \cite{exner2015quantum}).  Here we circumvent this complication by avoiding vertices, and get inspired by this limit to study the effects of adding a purely attractive potential to the edges of a generic graph $\mathcal{G}$. 

The paper is organized as follows:
in Section 2 we recall some well-known estimates and give some preliminary results.
The goal of Section 3 is to prove Lemma \ref{genex},
namely the existence criterion for the functional $E(u, \mathcal{G})$.
In Section 4 we exploit Lemma \ref{genex} and prove the existence of ground states for large (Theorem \ref{large mass})  and small (Theorem \ref{small mass}) masses.
In Section 5 we prove a nonexistence theorem (Theorem \ref{th:nonex}) that identifiess a class of graphs for which, for some values of the mass, ground states do not exist despite the presence of the potential.

\section{General framework}
\label{sec:preliminaries}

A metric graph $\mathcal G$ embedded in an Euclidean space
of dimension $d$ is a metric space defined by the pair $(\mathcal V, \E)$, where $\mathcal V \subset \R^d$ is a set of points called {vertices}, and $\E \subset \mathcal V \times \mathcal V$ is a set whose elements are called edges, that are interpreted as connections between couples of vertices. 
The metric  structure is defined by establishing that every edge $e \in \E$ is homeomorphic to a real interval $I_e = [0, \ell_e]$. If $\ell_e = \infty$, then the 
edge $e$ is a halfline and $\mathcal G$ is  
noncompact. We restrict to the case of 
a finite number of edges and vertices (that excludes in particular the case of periodic 
graphs) so in this context noncompact means
that at least one edge is a halfline. We denote by $\mathcal H_i$
the $i.$th halfline of $\G$. We shall not distinguish in notation between a point in $\G$ and its coordinate $x$ in the corresponding interval $I_e.$ Moreover, we assume that $\G$ is connected.

A function $u : \mathcal G \to \mathbb C$ is
a family of functions $u_e : I_e \to \mathbb C$, one for every edge, namely
$u \equiv \{ u_e\}_{e \in \E}.$
Differential operators are naturally defined edge by edge, namely
$u' = \{ u_e'\}_{e \in \E},$
and integrals on $\G$ are defined by
$$ \int_\G u (x) \, dx \ = \ \sum_{e \in \E} \int_0^{\ell_e} u (x_e) \, dx_e.$$

Furthermore, $L^p$-spaces are defined in the
natural way, i.e.,
$$L^p (\G) = \{ u : \mathcal G \to \mathbb C,
\ \| u_e \|_{L^p (I_e)} < \infty \ \forall e \in \E \ \}, \quad \| u \|_{L^p (\mathcal G)}^p = \sum_{e \in \E} \| u_e \|_{L^p (I_e)}^p,$$
and analogously for the Sobolev spaces, in particular
$$H^1 (\G) = \{ u : \mathcal G \to \mathbb C,
\ \| u_e \|_{H^1 (I_e)} < \infty \ \forall e \in \E \}, \quad \| u \|_{H^1 (\mathcal G)}^2 = \sum_{e \in \E} \| u_e \|_{H^1 (I_e)}^2 .$$

The space $H^1_\mu (\G)$, already mentioned in Sec. 1, in which we seek the minimizers of the functional $E(\cdot, \G),$ is then
defined as
$$H^1_\mu (\G) = \{ u, \ u \in H^1_\mu (\G), \ \| u \|_{L^2 (\G)}^2 = \mu \}.$$

For a more comprehensive introduction to variational calculus on metric graphs, see e.g., \cite{Ast15}.

Typically one defines the compact core of a graph as the complement of the halflines. In
principle, this notion does not coincide with that of the support of a given compactly supported potential. However, one can consider
a larger compact set by taking the union $\mathcal K'$ of the compact core of the graph with the support of the potential. The complement of $\mathcal K'$ contains then $n$ halflines $\mathcal H'_i$, which in general are subsets of the original $n$ halflines $\mathcal H_i$. 
The complement of the union of the halflines $\mathcal H_i'$ contains both the compact core of the graph $\G$ and the support of $w$, and can
be used in the results of the paper in the place of $\mathcal K$. For the sake of simplicity, in the following
we refer to $\mathcal K$, the compact core of the graph, as the support of the potential. Of course, all results remain valid if the compact support of $w$ does not coincide with $\mathcal K.$

In summary, the hypotheses on $w$ are the following: 
\begin{center}
    $w$ is continuos, nonnegative, and supported on $\mathcal{K}$, the compact core of $\G$.
\end{center} 

\noindent

For notational purposes we introduce the symbols 
\begin{equation*}
T(u) = \int_{\mathcal{G}} |u'|^2 \, dx, \quad \,
V(u) = \int_{\mathcal{G}} |u|^p \, dx, \quad \, 
W(u) = \int_{\mathcal{K}} w(x) |u|^2  \,dx,
\end{equation*}
so that
$$E (u, \G) = \frac 1 2 T (u) - \frac 1 p V (u) - \frac 1 2 W (u).$$

Furthermore, we denote
\begin{align*}
    \I_\G (\mu) & : = \inf_{u \in H^1_\mu (\G)} E (u, \G) \\
    \I_{\NLS,\G} (\mu) &: = \inf_{u \in H^1_\mu (\G)} 
    E_{\NLS} (u, \G).
\end{align*}
\begin{remark}\label{gsbound}
{\em
As mentioned in Sec. 1, the unique positive solution to the problem of the mass-constrained minimization of the functional (\ref{functional}) in the case $\mathcal{G} = \mathbb{R}$ is given, up to translations and multiplications by a constant phase, by soliton $\phi_\mu$ defined in Eq \eqref{soliton}, so
   
\begin{equation}
     \I_{\rm{NLS},{\mathbb{R}}}(\mu) =  E_{\rm{NLS}} (\phi_\mu, \mathbb{R})  = -\theta_p \mu ^{2 \beta+1} < 0
    \label{solitonenergy},
    \nonumber
\end{equation}
where $\theta_p := - E_{\NLS}(\phi_1, \mathbb{R}) > 0$. }
\end{remark}

\begin{remark} \label{gs} { \em
As mentioned in Sec. 1 and proved in 
\cite{Ast15}, if $\mathcal{G}$ is a noncompact graph then \begin{equation}
   \I_{\NLS, \G} (\mu) \leq - \theta_p \mu^{2\beta+1} .
    \nonumber
\end{equation}

Since we are considering a nonnegative $w$, we get
\begin{equation}
     %\inf_{u \in H^1_{\mu}(\mathcal{G})}E(\mu, \mathcal{G}) =
     \I_{\G}(\mu) \leq\I_{\NLS, \G}(\mu) \leq - \theta_p \mu^{2 \beta +1}.
     \label{upperb}
\end{equation}
 }

\end{remark}

\noindent

Occasionally we use the shorthand notation
$$\| w \|_\infty : = \max_{x \in \mathcal K} | w (x) |.$$

\subsection{General Properties}

In this section we recall some well-known properties that are widely used in the context of the nonlinear Schrödinger equation on noncompact graphs.

\begin{proposition}[Gagliardo-Nirenberg inequalities]
    There exists $M_p > 0$  such that
\begin{align}
    \|u\|^p_{L^p(\mathcal{G})} \leq M_p \|u\|_{L^2(\mathcal{G})}^{\frac{p}{2}+1} \|u'\|^{\frac{p}{2}-1}_{L^2(\mathcal{G})} 
       \label{gn1} 
       .
       \end{align}
       \noindent
       Moreover
       \begin{align}
    \|u\|^2_{L^{\infty}(\mathcal{G})} \leq 2 \|u\|_{L^{2}(\mathcal{G})} \|u'\|_{L^{2}(\mathcal{G})} 
    \label{gn2}
\end{align}
for every $u \in H^1({\mathcal{G}})$ and every metric graph $\mathcal{G}$ with $n$ infinite edges, $n \geq 1$.
\end{proposition}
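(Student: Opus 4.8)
\medskip
\noindent\textbf{Proof proposal.}
The plan is to prove the pointwise bound \eqref{gn2} first, and then to obtain \eqref{gn1} from it by an elementary estimate. Indeed, once \eqref{gn2} is available, one has $\|u\|_{L^\infty(\G)}^{p-2}\le\bigl(2\,\|u\|_{L^2(\G)}\|u'\|_{L^2(\G)}\bigr)^{(p-2)/2}$ (raise \eqref{gn2} to the positive power $(p-2)/2$), so
\[
\|u\|_{L^p(\G)}^p=\int_\G|u|^{p-2}\,|u|^2\,dx\le\|u\|_{L^\infty(\G)}^{\,p-2}\,\|u\|_{L^2(\G)}^2\le\bigl(2\,\|u\|_{L^2(\G)}\|u'\|_{L^2(\G)}\bigr)^{(p-2)/2}\|u\|_{L^2(\G)}^2 .
\]
Collecting the exponents of $\|u\|_{L^2(\G)}$ and $\|u'\|_{L^2(\G)}$, namely $\tfrac{p-2}{2}+2=\tfrac p2+1$ and $\tfrac{p-2}{2}=\tfrac p2-1$, this is exactly \eqref{gn1}, with the explicit constant $M_p=2^{(p-2)/2}$. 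Hence the whole content of the proposition lies in \eqref{gn2}.

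To prove \eqref{gn2}, I would reduce the problem on $\G$ to the classical $L^\infty$-estimate on a half-line by travelling from an arbitrary point out to infinity. Recall that a function in $H^1(\G)$ is continuous across the vertices and that its restriction to any unbounded edge lies in $H^1$ of a half-line, hence tends to $0$ at infinity. Fix $x\in\G$. Since $\G$ is connected, has finitely many edges, and possesses at least one half-line, there is an injective path $\Gamma\colon[0,\infty)\to\G$, parametrized by arclength, with $\Gamma(0)=x$ and which eventually runs along some half-line $\mH_j$ towards infinity: concretely one extracts a simple path joining $x$ to the vertex at which $\mH_j$ originates and concatenates it with $\mH_j$ (if $x$ itself lies on an unbounded edge, one simply runs that edge to infinity starting at $x$). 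Put $v:=u\circ\Gamma$. Then continuity of $u$ at the vertices together with $u_e\in H^1(I_e)$ on each traversed edge yields $v\in H^1_{\mathrm{loc}}([0,\infty))$ with $v'\in L^2(0,\infty)$ and $v(s)\to0$ as $s\to\infty$, while injectivity of $\Gamma$ forces $\Gamma^{-1}(e)$ to be a single subinterval for every edge $e$, whence $\|v\|_{L^2(0,\infty)}\le\|u\|_{L^2(\G)}$ and $\|v'\|_{L^2(0,\infty)}\le\|u'\|_{L^2(\G)}$.

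At this stage the standard one-dimensional computation closes the argument: since $v$ vanishes at infinity,
\[
|u(x)|^2=|v(0)|^2=-2\,\mathrm{Re}\!\int_0^\infty\overline{v(s)}\,v'(s)\,ds\le2\int_0^\infty|v(s)|\,|v'(s)|\,ds\le2\,\|v\|_{L^2(0,\infty)}\|v'\|_{L^2(0,\infty)}\le2\,\|u\|_{L^2(\G)}\|u'\|_{L^2(\G)},
\]
where the middle inequality is Cauchy--Schwarz; taking the supremum over $x\in\G$ gives \eqref{gn2} (the case $u\equiv0$ being trivial).

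I expect the only genuinely delicate point to be the construction of the path $\Gamma$ and the verification that $v=u\circ\Gamma$ is an admissible competitor for the half-line inequality: that $\Gamma$ can be chosen injective (so the $L^2$-norms along it are dominated by the norms on all of $\G$), that it can be prolonged to infinity along an unbounded edge, and that $v$ inherits from $u$ both the continuity and the decay at infinity. For a connected graph with finitely many edges this is routine graph bookkeeping, and the rest is the elementary Sobolev estimate in one dimension.
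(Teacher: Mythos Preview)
Your argument is correct. Note, however, that the paper does not actually give a proof of this proposition: it simply refers to Proposition~4.1 in \cite{TENTARELLI} and Proposition~2.1 in \cite{Ast16}. Your proposal supplies a self-contained proof, and the route you take---reducing \eqref{gn2} to the half-line estimate by running an injective path from an arbitrary point out to infinity along an unbounded edge, then deducing \eqref{gn1} by the interpolation $\|u\|_{L^p}^p\le\|u\|_{L^\infty}^{p-2}\|u\|_{L^2}^2$---is precisely the standard one used in those references. So there is no methodological divergence to discuss: you have written out what the paper leaves to citation, and with the explicit constant $M_p=2^{(p-2)/2}$ as a bonus.
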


 We refer for proofs to Proposition 4.1 in \cite{TENTARELLI} and Proposition 2.1 in \cite{Ast16}.

\begin{proposition} \label{holder}
    Let $\mathcal{K}$ be a compact metric graph with total length $|\mathcal{K}|$, $1 \leq r \leq p \leq \infty$ and $s = 1/r - 1/p$. Then for every $u \in H^1(\mathcal{K})$  
\begin{equation}
    \|u\|_{L^r (\mathcal{K})} \leq \|u\|_{L^p (\mathcal{K})}
    |\mathcal K|^s.
    \nonumber
\end{equation}
\end{proposition}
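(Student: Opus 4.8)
The plan is to reduce the statement to the classical Hölder inequality on the measure space underlying $\mathcal{K}$. Since $\G$ has finitely many edges and vertices, its compact core $\mathcal{K}$ consists of finitely many bounded edges, each identified with an interval $I_e=[0,\ell_e]$, and by the very definitions in Section~\ref{sec:preliminaries} one has $\int_{\mathcal{K}}f\,dx=\sum_e\int_{I_e}f_e\,dx_e$ and $\|u\|_{L^q(\mathcal{K})}^q=\sum_e\|u_e\|_{L^q(I_e)}^q$; in particular $\int_{\mathcal{K}}1\,dx=\sum_e\ell_e=|\mathcal{K}|$. Thus $\mathcal{K}$, equipped with this measure, is a finite measure space of total mass $|\mathcal{K}|$, and $L^q(\mathcal{K})$ coincides with the ordinary Lebesgue space of that measure space. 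This is the only structural remark needed: once it is made, the metric-graph nature of $\mathcal{K}$ plays no further role.

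First I would dispose of the degenerate cases. If $r=p$ then $s=0$ and the inequality is an identity. If $p=\infty$ then $s=1/r$ and $\int_{\mathcal{K}}|u|^r\,dx\le\|u\|_{L^\infty(\mathcal{K})}^r\int_{\mathcal{K}}1\,dx=\|u\|_{L^\infty(\mathcal{K})}^r|\mathcal{K}|$, which is the claim after taking $r$-th roots. For the main case $1\le r<p<\infty$, I would apply Hölder's inequality on $\mathcal{K}$ with the conjugate exponents $p/r$ and $(p/r)'=p/(p-r)$ to the pair of functions $|u|^r$ and $1$:
$$
\int_{\mathcal{K}}|u|^r\,dx=\int_{\mathcal{K}}|u|^r\cdot 1\,dx\le\left(\int_{\mathcal{K}}|u|^p\,dx\right)^{r/p}\left(\int_{\mathcal{K}}1\,dx\right)^{(p-r)/p}=\|u\|_{L^p(\mathcal{K})}^r\,|\mathcal{K}|^{(p-r)/p}.
$$
Since $s=\tfrac1r-\tfrac1p=\tfrac{p-r}{rp}$, one has $(p-r)/p=rs$, so taking $r$-th roots yields $\|u\|_{L^r(\mathcal{K})}\le\|u\|_{L^p(\mathcal{K})}|\mathcal{K}|^{s}$. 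The hypothesis $u\in H^1(\mathcal{K})$ is used only to ensure that the right-hand side is finite: on a compact interval $H^1\hookrightarrow L^\infty$, hence $u\in L^\infty(\mathcal{K})\subset L^p(\mathcal{K})$; the inequality itself holds for every $u\in L^p(\mathcal{K})$.

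There is essentially no obstacle: this is a routine lemma. The only points deserving a word of care are the verification, noted above, that the edge-by-edge integral over $\mathcal{K}$ is genuinely integration against a finite measure of total mass $|\mathcal{K}|$ --- so that the one-line Hölder argument can be run \emph{globally} rather than edge by edge --- and the bookkeeping identifying the exponent $(p-r)/p$ with $rs$. An equivalent route, if one prefers to stay on each edge, is to apply Hölder on each $I_e$, obtaining $\|u\|_{L^r(I_e)}\le\|u\|_{L^p(I_e)}\ell_e^{s}$, and then combine these estimates over the finitely many edges with a discrete Hölder inequality with the same exponents $p/r$, $p/(p-r)$, using that $rs\cdot\tfrac{p}{p-r}=1$; this reproduces the same bound.
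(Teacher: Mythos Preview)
Your proof is correct and follows essentially the same approach as the paper: apply H\"older's inequality to the pair $|u|^r$ and $1$ on $\mathcal{K}$, then identify the resulting exponent as $rs$. Your version is more detailed (you handle the degenerate cases $r=p$ and $p=\infty$ separately and make the exponent bookkeeping explicit), but the core idea is identical.
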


\begin{proof}
    This is a direct consequence of the Hölder inequality  %(\cite{Holder1889}). 
\begin{equation}
    \int_{\mathcal{K}} | f g| dx \leq (\int_{\mathcal{K}} |f|^p d x)^{\frac{1}{p}} (\int_{\mathcal{K}} |g|^q dx)^{\frac{1}{q}}
    \nonumber
\end{equation}
where $p, q > 0$ and $1/p + 1/q = 1$, if we consider $f = |u|^r$  and $g \equiv 1$ on ${\mathcal K}$.
\end{proof} 
\noindent
\indent Since we need lower and upper bounds for the terms appearing in the energy (\ref{functional}), we prove the following result.
\begin{proposition}
\label{apriori}
Let $\G$ be a metric graph with $n$ infinite edges, $n \geq 1$.
For all $u \in H^1_{\mu}(\mathcal{G})$ such that
\begin{equation}
    E(u, \mathcal{G}) \ \leq \ \frac{1}{2} \, \mathcal I_{\G} (\mu) \ < \ 0,
    \label{half inf}
\end{equation}
 the following estimates hold:
\begin{align}
        \min  \{ C_1 \mu^{2 \beta +1}, C_2 g^{-\frac{2p}{p-2}} \mu^{3 (2 \beta +1)} \} \leq & \ T (u) \leq  C_3 \mu^{2 \beta + 1 } + C_4 \|w\|_{\infty} \mu 
        \label{Tinf}\\
        \min \{ C_1 \mu^{2 \beta +1}, C_2 g^{-\frac{p}{2}} \mu^{(2 \beta +1)\frac{p}{2}} \} \leq &  \ V (u)\leq C_3 \mu^{2 \beta +1} + C_4 \|w\|_{\infty}^{\frac{p-2}{4}} \mu^{\frac{p}{2}}  \label{Vinf} \\
        \min  \{ C_1 \mu^{\beta +1}, C_2 g^{-\frac{p}{p-2}} \mu^{3 \beta +2} \} \leq & \ \|u\|^2_{L^{\infty}(\mathcal{G})} \leq  C_3 \mu^{\beta +1} + C_4 \|w\|_{\infty}^{\frac 1 2} \mu
        \label{uinf}
 \end{align}
 for some constants $C_1, C_2, C_3,C_4 > 0$, where  $g := \|w\|_{\infty} |\mathcal{K}|^{\frac{p-2}{p}}$.
\end{proposition}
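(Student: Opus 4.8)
The plan is to extract the bounds from three ingredients: the energy hypothesis \eqref{half inf}, the upper bound \eqref{upperb} on $\mathcal I_\G(\mu)$, and the Gagliardo--Nirenberg inequalities \eqref{gn1}--\eqref{gn2} together with Hölder on the compact core (Proposition \ref{holder}). Write $E(u,\G)=\tfrac12 T(u)-\tfrac1p V(u)-\tfrac12 W(u)$. For the \emph{upper} bounds, I would start from $E(u,\G)\le \tfrac12\mathcal I_\G(\mu)\le -\tfrac12\theta_p\mu^{2\beta+1}<0$, which immediately gives $\tfrac1p V(u)+\tfrac12 W(u)\le \tfrac12 T(u)-\tfrac12\theta_p\mu^{2\beta+1}$ and hence controls $V(u)$ and $W(u)$ by $T(u)$. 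To close the loop I need to bound $T(u)$ from above: estimate $W(u)\le \|w\|_\infty\mu$ trivially, and use \eqref{gn1} to write $\tfrac1p V(u)\le \tfrac1p M_p\mu^{p/4+1/2}T(u)^{p/4-1/2}$; since $2<p<6$ the exponent $p/4-1/2<1$, so feeding this into $\tfrac12 T(u)\le \tfrac1p V(u)+\tfrac12 W(u)+E(u,\G)\le \tfrac1p V(u)+\tfrac12\|w\|_\infty\mu$ (using $E\le 0$) and applying Young's inequality to absorb the $T(u)^{p/4-1/2}$ term yields $T(u)\le C_3\mu^{2\beta+1}+C_4\|w\|_\infty\mu$, which is \eqref{Tinf}, upper part (here $2\beta+1=\tfrac{p+2}{6-p}$ is exactly the exponent produced by Young applied to $\mu^{p/4+1/2}$). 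The upper bound on $V(u)$ in \eqref{Vinf} then follows by plugging this $T(u)$-bound back into \eqref{gn1} and expanding $(A+B)^{p/4-1/2}\le A^{p/4-1/2}+B^{p/4-1/2}$; the upper bound on $\|u\|_\infty^2$ in \eqref{uinf} follows the same way from \eqref{gn2}.

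For the \emph{lower} bounds, the idea is that a function with strictly negative energy cannot have $V(u)$ or $\|u\|_\infty^2$ too small, because then the potential term $W(u)$ would have to carry all the negativity, and $W(u)$ is itself controlled by $\|u\|_\infty^2$. Concretely, from $E(u,\G)<0$ we get $\tfrac1p V(u)+\tfrac12 W(u)>\tfrac12 T(u)\ge 0$; moreover the stronger hypothesis $E(u,\G)\le\tfrac12\mathcal I_\G(\mu)$ combined with $\mathcal I_\G(\mu)\le\mathcal I_{\G}(\mu)$ is of limited use directly, so the cleaner route is: $-\tfrac12\theta_p\mu^{2\beta+1}\ge E(u,\G)\ge -\tfrac1p V(u)-\tfrac12 W(u)$, hence $\tfrac1p V(u)+\tfrac12 W(u)\ge \tfrac12\theta_p\mu^{2\beta+1}$. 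Now bound $W(u)\le\|w\|_\infty\int_{\mathcal K}|u|^2\le\|w\|_\infty|\mathcal K|^{(p-2)/p}\|u\|_{L^p(\mathcal K)}^2\le g\,V(u)^{2/p}$ by Proposition \ref{holder} with $r=2$, $s=1/2-1/p$. So $\tfrac1p V(u)+\tfrac12 g\,V(u)^{2/p}\ge\tfrac12\theta_p\mu^{2\beta+1}$: one of the two summands is at least half the right-hand side, giving either $V(u)\ge C_1\mu^{2\beta+1}$ or $g\,V(u)^{2/p}\ge C\mu^{2\beta+1}$, i.e. $V(u)\ge C_2 g^{-p/2}\mu^{(2\beta+1)p/2}$. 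This is \eqref{Vinf}, lower part. The lower bound on $T(u)$ in \eqref{Tinf} then comes from \eqref{gn1} read backwards: $T(u)\ge (V(u)/(M_p\mu^{p/4+1/2}))^{4/(p-2)}$, and substituting the two alternatives for $V(u)$ produces the two terms $C_1\mu^{2\beta+1}$ and $C_2 g^{-2p/(p-2)}\mu^{3(2\beta+1)}$ (the exponents match after arithmetic with $\alpha,\beta$). Similarly, $\|u\|_\infty^2\ge \|u\|_{L^\infty(\mathcal K)}^2\ge |\mathcal K|^{-1}\|u\|_{L^1(\mathcal K)}^2$ is not quite what I want; instead use $V(u)=\int_\G|u|^p\le\|u\|_\infty^{p-2}\int_\G|u|^2=\mu\|u\|_\infty^{p-2}$, so $\|u\|_\infty^2\ge(V(u)/\mu)^{2/(p-2)}$, and again substituting the two alternatives for $V(u)$ yields \eqref{uinf}, lower part.

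The main obstacle is bookkeeping rather than conceptual: one must verify that every exponent of $\mu$ and of $g$ produced by Young's inequality and by the two-alternatives argument collapses exactly to the exponents $2\beta+1$, $3(2\beta+1)$, $\beta+1$, $3\beta+2$ appearing in the statement, using the identities $\alpha=\tfrac{2}{6-p}$, $\beta=\tfrac{p-2}{6-p}$, $2\beta+1=\tfrac{p+2}{6-p}$, and $\tfrac p4+\tfrac12=(2\beta+1)\cdot\tfrac{p-2}{4}\cdot\big(\tfrac{1}{2\beta+1}\big)+\dots$ — in practice one checks that $\tfrac{p/4+1/2}{1-(p/4-1/2)}=2\beta+1$ for the Young step, and that $\tfrac{4}{p-2}\cdot\tfrac{p}{2}\cdot(2\beta+1)-\tfrac{4}{p-2}\cdot(\tfrac p4+\tfrac12)=3(2\beta+1)$ for the $T(u)$ lower bound. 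A secondary point requiring a little care is making sure the hypothesis $E(u,\G)\le\tfrac12\mathcal I_\G(\mu)$ (rather than merely $E(u,\G)<0$) is genuinely used where needed — it is what guarantees the right-hand sides $\theta_p\mu^{2\beta+1}$ in the lower bounds are not replaced by something that degenerates as $\mu$ varies, and it also gives $E(u,\G)\le 0$ used repeatedly in the upper bounds. Once the exponent arithmetic is pinned down, all constants $C_1,\dots,C_4$ depend only on $p$ (through $M_p$, $\theta_p$, and numerical factors), as required.
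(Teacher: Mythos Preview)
Your argument is correct and uses the same ingredients as the paper (Gagliardo--Nirenberg, H\"older on $\mathcal K$, Young's inequality, and the bound \eqref{upperb}), but is organized differently. The paper splits into two cases according to whether $W(u)<\tfrac12\theta_p\mu^{2\beta+1}$ or not: in the small-$W$ case it observes that $\tfrac12 T(u)-\tfrac1p V(u)\le -\tfrac14\theta_p\mu^{2\beta+1}$, reduces to the potential-free situation, and quotes Lemma~2.6 of \cite{Ast16} to obtain the two-sided bounds $C_1\mu^{2\beta+1}\le T(u),V(u)\le C_3\mu^{2\beta+1}$ directly; in the large-$W$ case it derives the $g$-dependent lower bounds from $W(u)\le gV(u)^{2/p}$ and the $\|w\|_\infty$-dependent upper bounds via Young. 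Your treatment is instead uniform: the upper bounds follow from $E\le 0$ alone via Young (this is exactly the paper's Case-2 computation, which indeed does not use the case hypothesis), and for the lower bounds your ``one of the two summands is at least half the right-hand side'' dichotomy on $\tfrac1p V(u)+\tfrac12 g\,V(u)^{2/p}\ge\tfrac12\theta_p\mu^{2\beta+1}$ is equivalent to the paper's $W$ small/large split, since $W(u)\le gV(u)^{2/p}$. The advantage of your route is that it is self-contained (no appeal to \cite{Ast16}); the advantage of the paper's organization is that it makes transparent which regime produces which branch of the $\min$.
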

\begin{proof}
Let $u \in H^1_{\mu}(\mathcal{G})$ satisfy hypothesis \eqref{half inf}.
If 
\begin{equation}
\label{smallW}
    W (u) <  \frac{1}{2} \theta_p \mu^{2 \beta + 1}
    ,\end{equation} 
\noindent
then by \eqref{half inf}, \eqref{smallW}, and 
\eqref{upperb}

\begin{equation} \begin{split}
     \frac{1}{2} T (u) - \frac{1}{p} V (u) \ =  & \
     E (u, \mathcal G) + \frac 1 2 W (u) \ < \
     \frac 1 2 \mathcal I_{\mathcal{G}} (\mu) +\frac{1}{4} \theta_p \mu^{2 \beta + 1} \\ \leq \ & - 
     \frac{1}{2} \theta_p \mu^{2 \beta + 1}
     +  \frac{1}{4} \theta_p \mu^{2 \beta + 1} \ 
     = \ - \frac{1}{4} \theta_p \mu^{2 \beta + 1} 
     . \label{t-v}
    % \nonumber
     \end{split}
\end{equation}

So the estimates of Lemma 2.6 in \cite{Ast16} hold, namely 
\begin{equation} \begin{split} \label{largemu}
     C_1 \mu^{2 \beta +1} \leq &  T (u) \leq C_3 \mu^{2 \beta + 1 } \\
    C_1 \mu^{2 \beta +1} \leq & V  (u) \leq C_3 \mu^{2 \beta +1}  \\
    C_1 \mu^{\beta +1} \leq  & \|u\|^2_{L^{\infty}(\mathcal{G})} \leq  C_3 \mu^{\beta +1}.
    %    \label{uinf1}
 \end{split} \end{equation}

On the other hand, if 
\begin{equation}
    W (u) \geq \frac{1}{2} \theta_p \mu^{2 \beta + 1},
    \label{Wcase}
\end{equation} 
then we use the inequality \eqref{gn1} and
obtain
\begin{equation}
    V (u) \leq M_p \mu^{\frac{p+2}{4}}T (u)^{\frac{p-2}{4}} 
   \label{gnV}
\end{equation}
that, combined with (\ref{half inf}) yields 
\begin{equation}
    T  (u) - \frac{2 M_p}{p} \mu^{\frac{p+2}{4}} T (u)^{\frac{p-2}{4}} <  W (u) \leq  \|w\|_{\infty} \mu.
    \nonumber
\end{equation}

Since $\frac{p-2}{4} < 1$, {using Young's inequality} there exists $C_4 > 0$
such that
\begin{equation}
    T (u) \leq C_4 \|w\|_{\infty} \mu + C_4 \mu^{2 \beta +1},
   \nonumber
\end{equation}
thus by \eqref{gn1}
\begin{equation}
    V (u) \leq C_4 \|w\|_{\infty}^{\frac{p-2}{4}} \mu^{\frac{p}{2}} + C_4 \mu^{2 \beta +1},
    \nonumber
\end{equation}
and by \eqref{gn2}
\begin{equation}
    \|u\|^2_{L^{\infty}(\mathcal{G})} \leq C_4 \|w\|^{\frac{1}{2}}_{\infty} \mu + C_4 \mu^{\beta +1}.
    \nonumber
\end{equation}
\indent To obtain the lower bound of $V (u)$ we apply Proposition \ref{holder} with $r = 2$. Denoting $g = \|w\|_\infty |\mathcal{K}|^{\frac{p-2}{p}}$ it holds that 
\begin{equation}
    W (u) \leq g V (u)^{\frac{2}{p}} 
    \nonumber
\end{equation}
and from (\ref{Wcase}) one gets
\begin{equation}
    C_2 g^{-\frac{p}{2}} \mu^{(2 \beta +1)\frac{p}{2}} \leq V (u).
    \nonumber
\end{equation}

Using (\ref{gnV}) yields
\begin{equation}
    C_2 g^{-\frac{2p}{p-2}} \mu^{3(2\beta+1)} \leq T (u).
    \nonumber
\end{equation}

Finally, using the fact that $V (u) \leq \mu \|u\|^{p-2}_{\infty}$ we obtain
\begin{equation}
C_2 g^{-\frac{p}{p-2}} \mu^{3 \beta +2}  \leq \|u\|^2_{\infty} .
\nonumber
\end{equation}

Summing up, in the case Eq \eqref{Wcase} we obtain
the inequalities
\begin{equation} \begin{split} \label{smallmu}
 C_2 g^{-\frac{2p}{p-2}} \mu^{3 (2 \beta +1)}  \leq & \ T (u) \leq  C_4 \mu^{2 \beta + 1 } + C_4 \|w\|_{\infty} \mu \\
  C_2 g^{-\frac{p}{2}} \mu^{(2 \beta +1)\frac{p}{2}}  \leq &  \ V (u)\leq C_4 \mu^{2 \beta +1} + C_4 \|w\|_{\infty}^{\frac{p-2}{4}} \mu^{\frac{p}{2}}   \\
         C_2 g^{-\frac{p}{p-2}} \mu^{3 \beta +2}  \leq & \ \|u\|^2_{L^{\infty}(\mathcal{G})} \leq  C_4\mu^{\beta +1} + C_4 \|w\|_{\infty}^{\frac 1 2} \mu.
        \end{split} \end{equation}

By Eqs \eqref{largemu} and \eqref{smallmu} the proof is complete.

\end{proof}

\begin{remark}{\em If $u$ is a Ground State at mass $\mu$, then it satisfies Eq \eqref{half inf}, so
the estimates in Proposition \ref{apriori} hold. Moreover,  from Eq \eqref{smallW} one sees that the effect
of the potential 
does not appear in the estimates if $\mu^{2 \beta} > 2 \frac{\| w \|_{\infty}}{\theta_p}. $ 
Then, the presence of the potential 
is effective in the small mass regime.}

\end{remark}

\begin{remark}
{\em We shall use Proposition \ref{apriori} in the proof of Theorem \ref{th:nonex}. We
need, however, the inequalities Eq \eqref{largemu} only, since the hypotheses of the theorem imply
the assumption Eq \eqref{smallW}. Nonetheless, we chose to prove the more general inequalities Eqs \eqref{Tinf}\text{–}\eqref{uinf}   %Eq \eqref{Vinf}, and 
for the sake of completeness.
}
\end{remark}

In the proof of Theorem \ref{th:nonex} we shall need the notion of monotone rearrangement of a function on a metric graph. Such a notion was first introduced in \cite{friedlander2005extremal}, where the monotone rearrangement is defined for functions on metric graphs by mimicking the analogous classical notion for functions on an interval (see e.g., \cite{Kawohl}). Moreover, in \cite{friedlander2005extremal} the P\'olya-Szeg\H{o} inequality was proved too, stating that the monotone rearrangement does not increase the kinetic energy.  In \cite{Ast15} the theory was extended to the symmetric rearrangement. Here we shall need the monotone rearrangement only,
so we can directly quote the part of Proposition 3.1 in \cite{Ast15} which is relevant for the proof of Theorem \ref{th:nonex}.

\begin{proposition}[Monotone rearrangement] \label{rearrangements}
  
    Let $\G$ be a connected metric graph and let $u \in H^1(\mathcal{G})$ be nonnegative.  Denote by $u^*$ the monotone
    rearrangement of $u$ as defined in \cite{friedlander2005extremal,Ast15}  on the interval $I^* = [0, | \G|),$ where $| \G|$ denotes the total length of $\G$, i.e. the sum, possibly infinite, of the lengths of all edges of $\G.$
Then, $u^* \in H^1 (I^*)$ and
\begin{eqnarray}
    \| u^* \|_{L^r (I^*)} & = &   \| u \|_{L^r (\G)}, \quad r \in [1, +\infty]
    \label{equimeas}\\
     \| (u^*)' \|_{L^2 (I^*)} & \leq &   \| u' \|_{ L^2(\G)}
     \label{polyaszego}. 
\end{eqnarray}
\end{proposition}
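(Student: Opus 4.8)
\textit{Proof proposal.} The plan is to construct $u^*$ explicitly from the distribution function of $u$ and then to establish the two claimed properties in turn. Since $u\in H^1(\G)$ is nonnegative, continuous on $\G$ and bounded (by the Gagliardo--Nirenberg inequality \eqref{gn2}), the \emph{distribution function}
\[
\rho(t)\ :=\ \bigl|\{x\in\G:\ u(x)>t\}\bigr|,\qquad t\ge 0,
\]
where $|\cdot|$ denotes the length measure on $\G$, is well defined, non-increasing, right-continuous, with $\rho(0^+)=|\{u>0\}|\le|\G|$ and $\rho(t)\to 0$ as $t\to\|u\|_\infty$. One then defines $u^*:I^*=[0,|\G|)\to[0,\infty)$ as the monotone non-increasing generalized inverse of $\rho$, namely $u^*(s):=\inf\{t\ge0:\ \rho(t)\le s\}$, extended by $0$; this works verbatim whether $|\G|$ is finite or infinite, and by construction $u$ and $u^*$ have the same distribution function.

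Equimeasurability \eqref{equimeas} is then immediate: for $r<\infty$ the layer-cake formula gives $\int_\G u^r\,dx=r\int_0^\infty t^{r-1}\rho(t)\,dt=\int_{I^*}(u^*)^r\,ds$, while the case $r=\infty$ follows from $\|u\|_\infty=\sup\{t:\ \rho(t)>0\}=\|u^*\|_\infty$.

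For the P\'olya--Szeg\H{o} inequality \eqref{polyaszego} I would use the coarea formula on the metric graph. Removing the set where $u'=0$ (equivalently the subset on which $u$ is locally constant), which contributes nothing to either side, for a.e.\ value $t$ in the range of $u$ the level set $\{u=t\}$ consists of finitely many points, say $N(t)\ge1$ of them, and
\[
-\rho'(t)\ =\ \sum_{x:\,u(x)=t}\frac{1}{|u'(x)|},\qquad
\int_\G|u'|^2\,dx\ =\ \int_0^\infty\Bigl(\,\sum_{x:\,u(x)=t}|u'(x)|\Bigr)dt.
\]
On the interval $I^*$ the level set of the monotone function $u^*$ is a single point for a.e.\ $t$, so the same identities read $-\rho'(t)=1/|(u^*)'|$ and $\int_{I^*}|(u^*)'|^2\,ds=\int_0^\infty\frac{dt}{-\rho'(t)}$. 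By the Cauchy--Schwarz inequality applied to the finite sum over $\{u=t\}$,
\[
\Bigl(\,\sum_{x:\,u(x)=t}|u'(x)|\Bigr)\bigl(-\rho'(t)\bigr)\ \ge\ N(t)^2\ \ge\ 1,
\]
hence $\sum_{x:\,u(x)=t}|u'(x)|\ge 1/(-\rho'(t))$, and integrating in $t$ gives $\int_\G|u'|^2\,dx\ge\int_{I^*}|(u^*)'|^2\,ds$. In particular the right-hand side is finite; combined with $\|u^*\|_{L^2(I^*)}=\|u\|_{L^2(\G)}<\infty$ and the monotonicity of $u^*$, this yields $u^*\in H^1(I^*)$.

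The main technical obstacle is making the coarea/area-formula step rigorous for a general $H^1$ function on $\G$: one must control the \emph{critical values} (the set of $t$ for which $\{u=t\}$ is not finite, meets a vertex, or at which $\rho$ fails to be absolutely continuous) via a Sard-type argument, and one must justify the absolute continuity of $u^*$. The clean route is to prove the estimate first for functions that are piecewise $C^1$ with nondegenerate critical points and respect the continuity conditions at the vertices, and then to pass to the limit using density in $H^1(\G)$ together with the weak lower semicontinuity of the Dirichlet energy and the continuity of the rearrangement map on $L^2$. This is precisely the argument carried out in \cite{friedlander2005extremal,Ast15}, from which the statement is quoted, so here it suffices to invoke it.
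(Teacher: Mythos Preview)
The paper does not actually prove this proposition: it is stated as a direct quotation of (part of) Proposition~3.1 in \cite{Ast15} and the results of \cite{friedlander2005extremal}, with no argument given. Your proposal goes further and sketches correctly the standard proof (distribution function, layer-cake for \eqref{equimeas}, coarea formula plus Cauchy--Schwarz on level sets for \eqref{polyaszego}), which is indeed the argument underlying those references; since you also conclude by invoking \cite{friedlander2005extremal,Ast15}, your proposal is consistent with, and more detailed than, what the paper does.
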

Notice that for a general complex-valued function
$u \in H^1(\mathcal{G})$, 
one has  $\| |u| \|_{L^r(\G)} = \| u \|_{L^r (\G)}$ and
$\| |u|' \|_{L^2(\G)} \leq \| u' \|_{L^2 (\G)}$.
Thus, setting $u^* = | u |^*$ both identity \eqref{equimeas} and Pólya-Szeg\H{o} inequality \eqref{polyaszego}
hold.

\begin{remark}{\em (Potential induced by 
curvature)
    From Remark 2.3 in \cite{Ast16} one has that if $u \in H^1(\mathcal{G})$, then the quantities 
\begin{equation}
    \mu^{-2 \beta -1} \|u'\|^2_{L^2(\mathcal{G})}, \hspace{0.5cm} \mu^{-2 \beta -1} \|u\|^p_{L^p(\mathcal{G})}, \hspace{0.5cm} \mu^{-\beta -1} \|u\|^2_{L^{\infty}(\mathcal{G})}
\nonumber
\end{equation}
are invariant under the following rescaling of $\mathcal{G}$ and $u$:
\begin{equation}
    \mathcal{G} \mapsto t^{- \beta} \mathcal{G}, \hspace{0.5cm} u(\cdot) \mapsto t^{\alpha} u (t^{\beta} \cdot).
    \label{scaling}
    \nonumber
\end{equation}

Notice that $u$ is rescaled with the mass as solitons do.
The potential term shows the same invariance if 
one imposes the scaling 
\begin{equation}
    w(\cdot) \rightarrow t^{2 \beta} w(\cdot),
    \label{scalemass}
\end{equation}
i.e., if it scales as the inverse of the square of a length. 
In fact 
\begin{align*}
    \int_{\mathcal{K}} w(x) |u(x)|^2 dx \mapsto &
    \int_{t^{-\beta} \mathcal{K}} t^{2 \beta} w(x) |t^{\alpha}u(t^{\beta} x)|^2 t^{- \beta} d(t^{\beta}x) \\
   = & \ t^{\beta + 2 \alpha} \int_{ \mathcal{K}}  w(x) |u( x)|^2  dx  
\end{align*}
and since 
%\begin{equation}   
 $\beta + 2 \alpha = 2 \beta + 1 
$ from (\ref{alfabeta}), one recovers the same scaling law and 
thus the quantity $\mu^{-2 \beta -1} E(u,\mathcal{G})$ is invariant while the mass  is mapped from $\mu$ to $t \mu$. 

Of course, for a generic potential the scaling assumption \eqref{scalemass} is meaningless. Yet, there is one special case in which it is the correct scaling law,
i.e., the already mentioned case of the potential $- \gamma^2 (x)/4$ induced by the 
presence of a curvature in the edges. Indeed the
curvature $\gamma$ scales as the inverse of a length, namely $\gamma \mapsto t^{\beta} \gamma$, which is consistent with the fact that, by definition, the curvature is the inverse of the radius of the osculating circle.} 
\end{remark}

\section{Existence criterion for $E(\cdot, \G)$}

Here we extend the existence criterion given
in Sec. 1 to the functional (\ref{functional}) by proving the following
\begin{lemma}[Existence criterion for $E (\cdot, \G)$] \label{genex}
Fix $2 < p < 6$ and let $\mathcal{G}$ be a metric graph with $n$ infinite edges, $n \geq 1$. If there exists $v \in H^1_{\mu}(\mathcal{G})$ such that 
\begin{equation}
  E(v, \mathcal{G})  \leq E_{\NLS}(\phi_{\mu}, \mathbb{R}) ,
  \label{criterion}
\end{equation}
then $E (\cdot, \mathcal{G})$ admits a Ground State at mass $\mu$.
\end{lemma}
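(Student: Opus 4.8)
The plan is to follow the standard concentration-compactness scheme for NLS ground states on noncompact graphs, as in \cite{Ast15,Ast16}, adapting it to accommodate the nonnegative potential term $W$. Since \eqref{upperb} already gives $\I_\G(\mu) \le -\theta_p\mu^{2\beta+1} < 0$, and the hypothesis \eqref{criterion} forces $\I_\G(\mu) \le E_\NLS(\phi_\mu,\R) = -\theta_p\mu^{2\beta+1}$ with equality in fact holding (the reverse inequality being \eqref{upperb}), I would take a minimizing sequence $\{u_n\} \subset H^1_\mu(\G)$ for $E(\cdot,\G)$. The first step is to show $\{u_n\}$ is bounded in $H^1(\G)$: for $n$ large one has $E(u_n,\G) \le \tfrac12\I_\G(\mu) < 0$, so Proposition \ref{apriori} applies and yields a uniform upper bound on $T(u_n)$ (together with the fixed mass $\mu$, this bounds $\|u_n\|_{H^1(\G)}$). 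Hence, up to subsequences, $u_n \rightharpoonup u$ weakly in $H^1(\G)$, strongly in $L^\infty_{loc}$ and in $L^r(\mK)$ for all $r$ (compactness of $\mK$), and pointwise.

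The second step is to control the possible loss of mass at infinity. Write $m := \|u\|_{L^2(\G)}^2 \le \mu$. Because $\mK$ is compact and $u_n \to u$ strongly on $\mK$, the potential term passes to the limit: $W(u_n) \to W(u)$. The crux is the dichotomy: either $m = \mu$ (compactness, and we are essentially done), or $0 \le m < \mu$, in which case mass $\mu - m$ escapes along the halflines. In the latter case I would use the subadditivity-type inequality for $\I_{\NLS}$ on $\R$ together with the Brezis–Lieb lemma: splitting $u_n = u + (u_n - u)$, one gets $E(u_n,\G) = E(u,\G) + E_\NLS(u_n - u, \G) + o(1)$ (the potential cross terms vanish since $W$ sees only the compact region where $u_n \to u$), and $E_\NLS(u_n-u,\G) \ge \I_{\NLS,\G}(\mu - m) \ge -\theta_p(\mu-m)^{2\beta+1}$. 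Combining with the strict concavity inequality $-\theta_p\mu^{2\beta+1} < -\theta_p m^{2\beta+1} - \theta_p(\mu-m)^{2\beta+1}$ valid for $0 < m < \mu$ (since $2\beta+1 > 1$), and $E(u,\G) \ge \I_\G(m)$, one would derive $\I_\G(\mu) \ge \I_\G(m) - \theta_p(\mu-m)^{2\beta+1} > \I_\G(\mu)$ unless the escaping part is trivial — a contradiction — \emph{provided} one knows $\I_\G(m) \ge -\theta_p m^{2\beta+1}$ or can otherwise close the inequality; here one uses that $W \ge 0$ makes $\I_\G$ no larger than $\I_{\NLS}$ but the strict-concavity gap still suffices. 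The boundary case $m = 0$ (all mass escapes) is excluded directly: then $E(u_n,\G) = E_\NLS(u_n,\G) + o(1) \ge -\theta_p\mu^{2\beta+1}$, so $\I_\G(\mu) \ge -\theta_p\mu^{2\beta+1}$, but \eqref{criterion} would then be saturated with $W$ contributing nothing, and one must rule out this by a more careful argument — this is where hypothesis \eqref{criterion} with the \emph{sign} of $w$ and the possibility of strict inequality is used; if \eqref{criterion} is strict the contradiction is immediate, and if it is an equality one argues that a minimizing sequence with all mass escaping would make $u \equiv 0$ and $W(u_n) \to 0$, reducing to the pure-NLS problem where one invokes the known criterion of \cite{Ast15}.

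The third step, once $m = \mu$, is to upgrade weak to strong convergence: $u_n \rightharpoonup u$ with $\|u_n\|_{L^2} = \|u\|_{L^2} = \mu$ gives $u_n \to u$ strongly in $L^2(\G)$, hence (interpolating with the $H^1$ bound) strongly in $L^p(\G)$, so $V(u_n) \to V(u)$; combined with $W(u_n)\to W(u)$ and weak lower semicontinuity of $T$, $E(u,\G) \le \liminf E(u_n,\G) = \I_\G(\mu)$, so $u$ is a minimizer. (Lower semicontinuity of $T$ together with $V,W$ converging also forces $T(u_n)\to T(u)$, giving genuine $H^1$ convergence, though this is not needed for the conclusion.)

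The main obstacle is the dichotomy step — specifically, ruling out partial and total escape of mass. The delicate point is that the presence of $W\ge 0$ lowers $\I_\G$ relative to $\I_{\NLS,\G}$, so one cannot blindly quote the pure-NLS subadditivity; one must verify that the strict-concavity gap for $-\theta_p\mu^{2\beta+1}$ dominates, using that the escaping part never interacts with the potential and hence is governed purely by $\I_{\NLS,\R}$. I expect the cleanest route is to prove the strict subadditivity $\I_\G(\mu) < \I_\G(m) + \I_{\NLS,\R}(\mu - m)$ for $0 < m < \mu$ as a lemma (which follows from $\I_\G(m) \le -\theta_p m^{2\beta+1}$ via \eqref{upperb} and strict concavity of $s \mapsto s^{2\beta+1}$), and then run the standard argument; the hypothesis \eqref{criterion} enters precisely to exclude the degenerate alternative that the infimum is attained only "at infinity."
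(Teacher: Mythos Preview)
Your overall scheme is right, and the $m=\mu$ and $m=0$ cases are handled essentially as in the paper. The gap is in the dichotomy case $0<m<\mu$, and it is not just cosmetic.

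First, the chain $E_{\NLS}(u_n-u,\G)\ge \I_{\NLS,\G}(\mu-m)\ge -\theta_p(\mu-m)^{2\beta+1}$ breaks at the second inequality: by \eqref{upperb} (with $w\equiv 0$) one only has $\I_{\NLS,\G}(\nu)\le -\theta_p\nu^{2\beta+1}$, and the reverse inequality is \emph{false} in general (take $\G=\R^+$, where the half-soliton gives $\I_{\NLS,\R^+}(\nu)=-2^{2\beta}\theta_p\nu^{2\beta+1}<-\theta_p\nu^{2\beta+1}$). So you cannot bound the escaping part from below by the soliton energy via the infimum on $\G$.

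Second, your proposed ``cleanest route'' --- deriving $\I_\G(\mu)<\I_\G(m)+\I_{\NLS,\R}(\mu-m)$ from $\I_\G(m)\le -\theta_p m^{2\beta+1}$ and the strict inequality $m^{2\beta+1}+(\mu-m)^{2\beta+1}<\mu^{2\beta+1}$ --- has the inequalities pointing the wrong way. From those two facts you only get $\I_\G(m)-\theta_p(\mu-m)^{2\beta+1}\le -\theta_p m^{2\beta+1}-\theta_p(\mu-m)^{2\beta+1}$ and $-\theta_p\mu^{2\beta+1}\ge \I_\G(\mu)$; chaining these gives no comparison between $\I_\G(\mu)$ and $\I_\G(m)-\theta_p(\mu-m)^{2\beta+1}$.

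The paper closes both holes at once by proving directly that $\mu\mapsto\I_\G(\mu)$ is \emph{strictly concave} (Proposition~\ref{subadditive}: write $\I_\G(\mu)=\inf_{u}f_u(\mu)$ with each $f_u$ strictly concave, uniformly so on compact intervals), hence strictly subadditive: $\I_\G(\mu)<\I_\G(m)+\I_\G(\mu-m)$ for $0<m<\mu$. Then the Brezis--Lieb split is taken with the \emph{full} energy on both pieces, $E(u_n,\G)=E(u_n-u,\G)+E(u,\G)+o(1)$ (the potential term splits too), so that $E(u_n-u,\G)\ge \I_\G(\|u_n-u\|_{L^2}^2)$ and, passing to the limit using the continuity of $\I_\G$, one lands on $\I_\G(\mu)\ge \I_\G(m)+\I_\G(\mu-m)$, contradicting strict subadditivity. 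No comparison with $\I_{\NLS,\G}$ or $\I_{\NLS,\R}$ is needed at this step. Your hybrid split $E(u,\G)+E_{\NLS}(u_n-u,\G)$ is correct as an identity, but it forces you into lower bounds that are either false or require the escape-to-infinity analysis of \cite{Ast16} already at the dichotomy stage; working entirely with $\I_\G$ avoids this.
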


In order to prove Lemma \ref{genex} we follow the line
of Sec. 3 in \cite{Ast16}.
Thus, as a first step we show that the 
strict concavity  of  $\I_\G$ as a function of $\mu$ is preserved in the presence of the potential term.

\begin{proposition}
\label{subadditive}
 $\mathcal{I}_{\mathcal{G}}$ 
is strictly concave and strictly subadditive  as a function of the mass $\mu$.
\end{proposition}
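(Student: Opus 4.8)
The plan is to establish strict concavity of $\mu \mapsto \I_{\G}(\mu)$ on $(0,\infty)$ first, and then deduce strict subadditivity as a standard consequence (a concave function $f$ on $(0,\infty)$ with $f(0^+)\le 0$ satisfies $f(\mu_1+\mu_2) < f(\mu_1)+f(\mu_2)$ for $\mu_1,\mu_2>0$; here $\I_\G(\mu) \le -\theta_p \mu^{2\beta+1} \to 0$ as $\mu \to 0^+$ by \eqref{upperb}, and $\I_\G$ is not identically zero, which forces the strict inequality once concavity is in hand). So the heart of the matter is concavity.

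To prove strict concavity I would follow the scaling argument of Sec.\ 3 in \cite{Ast16}. Fix $\mu_1 \neq \mu_2$ and $t \in (0,1)$, and set $\mu_t = t\mu_1 + (1-t)\mu_2$. Choose near-optimizers $u_i \in H^1_{\mu_i}(\G)$ with $E(u_i,\G) \le \I_\G(\mu_i) + \varepsilon$. The idea is to rescale each $u_i$ to have mass $\mu_t$ while controlling the energy. The cleanest route is to fix one reference edge — a halfline $\mH_j$ — and, for a near-optimizer $u$ at mass $\mu$, produce a competitor at mass $\mu'$ by a combination of (a) multiplying $u$ by a constant $\sqrt{\mu'/\mu}$, which scales $T$ and $W$ linearly in $\mu'/\mu$ and $V$ by $(\mu'/\mu)^{p/2}$, or (b) using the length rescaling from Remark 2.8. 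Since the potential is supported on the compact core and one wants to keep it in play, the amplitude rescaling is the safe choice: it gives $E(\sqrt{\mu'/\mu}\,u,\G) = \tfrac{\mu'}{\mu}\big(\tfrac12 T(u) - \tfrac12 W(u)\big) - \tfrac1p (\mu'/\mu)^{p/2} V(u)$. Because $p>2$, the map $s \mapsto$ (this expression with $\mu' = s\mu$) is, for near-optimizers (which by Proposition \ref{apriori} have $T,V,W$ comparable to positive powers of the mass and in particular $V(u)>0$), strictly concave in $s$ on a neighborhood of $1$ — the $-s^{p/2}V(u)$ term is strictly concave and dominates the affine remainder's failure to be strictly concave. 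Taking $\mu' = \mu_t$ in two ways (from $u_1$ and from $u_2$) and combining with weights $t$ and $1-t$, the strict concavity of the scaling map yields $t\,E(\tilde u_1,\G) + (1-t)E(\tilde u_2,\G) < t\,\I_\G(\mu_1) + (1-t)\I_\G(\mu_2) + \varepsilon'$ with $\varepsilon'\to 0$, hence $\I_\G(\mu_t) < t\,\I_\G(\mu_1)+(1-t)\I_\G(\mu_2) + \varepsilon'$, and letting $\varepsilon'\to 0$ gives $\le$; a fixed gap coming from the strict concavity of $s\mapsto -s^{p/2}V(u)$ upgrades this to strict inequality.

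The main obstacle is making the strict inequality genuinely uniform: the quantitative concavity gap in the scaling map depends on $V(u_i)$ and on how far $\mu_t/\mu_i$ is from $1$, and one must check this gap does not collapse as $\varepsilon \to 0$. This is exactly where Proposition \ref{apriori} is needed — near-optimizers satisfy \eqref{half inf} (for $\varepsilon$ small, since $\I_\G(\mu)<0$), so $V(u_i)$ is bounded below by a positive constant depending only on $\mu_i$, $\|w\|_\infty$, $|\mathcal K|$, and $p$, not on $\varepsilon$. With that lower bound the concavity gap is bounded below uniformly, and the argument closes. I would also remark that the only place the noncompactness / infinite edge is used is to guarantee $\I_\G(\mu)<0$ via \eqref{upperb}, so that \eqref{half inf} applies to near-minimizers.
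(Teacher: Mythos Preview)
Your ingredients match the paper's exactly --- amplitude rescaling $u\mapsto\sqrt{\mu'/\mu}\,u$, the strict concavity of $s\mapsto E(\sqrt{s}\,\bar u,\G)$ whenever $V(\bar u)>0$, and Proposition~\ref{apriori} to secure a uniform lower bound on $V$ for near-minimizers. But the argument is run in the wrong direction and ends up aiming at the convexity inequality.

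Concavity of $\I_\G$ is the statement $\I_\G(\mu_t)\ge t\,\I_\G(\mu_1)+(1-t)\,\I_\G(\mu_2)$, so what is needed is a \emph{lower} bound on $\I_\G(\mu_t)$. Taking near-optimizers at $\mu_1,\mu_2$ and rescaling them to mass $\mu_t$ produces competitors at $\mu_t$, hence only an \emph{upper} bound on $\I_\G(\mu_t)$. Moreover, your displayed claim $t\,E(\tilde u_1,\G)+(1-t)\,E(\tilde u_2,\G)<t\,\I_\G(\mu_1)+(1-t)\,\I_\G(\mu_2)+\varepsilon'$ does not follow from concavity of the scaling map: concavity compares $f_{\bar u_i}(\mu_t)$ with a weighted average of $f_{\bar u_i}(\mu_1)$ and $f_{\bar u_i}(\mu_2)$, but you only control $f_{\bar u_i}(\mu_i)$, not $f_{\bar u_i}$ at the other endpoint. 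The fix --- which is exactly what the paper does --- is to reverse the direction: take a single near-optimizer $v$ at mass $\mu_t$, set $\bar u=v/\sqrt{\mu_t}$, and use $\I_\G(\mu_i)\le f_{\bar u}(\mu_i)$ together with the (uniformly) strict concavity of $f_{\bar u}$ to obtain
\[
\I_\G(\mu_t)+\varepsilon \;\ge\; f_{\bar u}(\mu_t) \;\ge\; t\,f_{\bar u}(\mu_1)+(1-t)\,f_{\bar u}(\mu_2)+\delta \;\ge\; t\,\I_\G(\mu_1)+(1-t)\,\I_\G(\mu_2)+\delta,
\]
with $\delta>0$ independent of $\varepsilon$ thanks to the lower bound on $V(\bar u)$ from Proposition~\ref{apriori}. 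The paper packages this by writing $\I_\G(\mu)=\inf_{u\in U} f_u(\mu)$ over a set $U\subset H^1_1(\G)$ on which $V$ is bounded below, so that $\I_\G$ is an infimum of uniformly strictly concave functions; strict subadditivity then follows from $\I_\G(0)=0$ as you say.
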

\begin{proof}

{Consider a sequence $\{v_n \} \subset H^1_\mu (\G)$ that
minimizes $E( \cdot, \G)$ at mass $\mu$. Since $\mathcal I_\G (\mu) < 0,$ eventually $E (v_n, \G) \leq \frac{1}{2} \I_\G (\mu)$. Let us restrict to the elements of the sequence that satisfy such inequality.

By Proposition \ref{apriori}, one has $V (v_n) \geq \min (C_1 \mu^{2 \beta + 1}, C_2 g^{-\frac p 2} \mu^{(2 \beta + 1) \frac p 2})$. 
Let us now introduce the sequence $\{ u_n \} \subset H^1_1 (\G)$, defined as $u_n = \frac{v_n}{\sqrt \mu}.$
Since $V (u_n) = \mu^{-\frac p 2} V (v_n)$, one has $$\mu^{\frac p 2} V (u_n) = V (v_n)
 \geq \min (C_1 \mu^{2 \beta + 1}, C_2 g^{-\frac p 2} \mu^{(2 \beta + 1) \frac p 2}),
$$
that implies that every function $u_n$ belongs to the set 
\begin{equation}
    U:= \left\{ u \in H^1(\mathcal{G}), \int_{\mathcal{G}} |u|^2 dx = 1, \ \mu^{\frac{p}{2}} V(u) \geq \min (C_1 \mu^{2 \beta +1}, \ C_2 g^{-\frac{p}{2}} \mu^{(2 \beta +1)\frac{p}{2}} ) \right\},
    \label{U}
    \nonumber
\end{equation}
with $C_1, C_2$ and $g$ as in Proposition \ref{apriori}.
Then we consider the family of functions $f_u$ defined by
\begin{equation}
    f_u(\mu) := E(\sqrt{\mu} u, \mathcal{G}) = \frac{\mu}{2} \int_{\mathcal{G}} |u'|^2 dx - \frac{\mu^{\frac{p}{2}}}{p} V(u) - \frac \mu 2 \int_{\mathcal{K}} w(x) |u|^2 dx,  \hspace{0.2cm} u \in U,
    \nonumber
\end{equation}

\noindent
so that the following  identities hold:
\begin{equation}
    \I_\G(\mu) \ = \ \lim_{n\rightarrow\infty} E (v_n, \G) = \lim_{n\rightarrow\infty} f_{u_n} (\mu),
\end{equation}
thus
$$\I_\G (\mu) = \inf_{u \in U} f_u (\mu).$$
 
 Now,  since $u \in U$ implies $V (u) > 0$,  
\begin{equation}
%    f_u''(\mu) = - c_p \mu^{\frac{p}{2} -2} V_u - \frac{1}{4} \int_{\mathcal{K}} \gamma^2(x) dx < 0, \hspace{0.5cm} (c_p >0)
  f_u''(\mu) = - \frac{p-2}{4} \mu^{\frac{p}{2} -2} V(u)  < 0,
    \nonumber
\end{equation}
which implies that $\I_\G$ is concave. Furthermore,
since 
$f_u$
is uniformly strictly concave, $\mathcal I_{\G}$ is strictly concave too on every interval $[a, b] \subset (0,\infty)$ and also strictly subadditive since $\I_{\mathcal{G}}(0) = 0$. 
}
\end{proof}

Now we analyze the behaviour of the minimizing sequences.
\begin{proposition}
\label{wcompact}
Any minimizing sequence $\{u_n\} \subset {H}^{1}_{\mu}(\mathcal{G})$ for the functional  $E(\cdot, \mathcal{G})$ defined in \eqref{nlsc}
with $2<p<6$, is weakly compact in $H^{1}(\mathcal{G})$.
\end{proposition}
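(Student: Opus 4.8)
The plan is to show that a minimizing sequence $\{u_n\}$ for $E(\cdot,\G)$ at mass $\mu$ is bounded in $H^1(\G)$, whence weak compactness in $H^1(\G)$ follows from reflexivity (and the fact that bounded sequences in a Hilbert space have weakly convergent subsequences). The only nontrivial content is therefore the $H^1$-bound, i.e. controlling $\|u_n'\|_{L^2(\G)}^2 = T(u_n)$ (the $L^2$-norm is fixed at $\mu$).

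First I would observe that, since $\I_\G(\mu) < 0$ by Eq.~\eqref{upperb}, eventually every $u_n$ satisfies $E(u_n,\G) \le \tfrac12 \I_\G(\mu) < 0$, so we may pass to the tail of the sequence and assume the hypothesis \eqref{half inf} of Proposition~\ref{apriori} holds for every $n$. Then the upper bound in Eq.~\eqref{Tinf} gives immediately
\begin{equation}
T(u_n) \ \le \ C_3 \mu^{2\beta+1} + C_4 \|w\|_\infty \mu,
\nonumber
\end{equation}
a bound uniform in $n$. Hence $\|u_n\|_{H^1(\G)}^2 = T(u_n) + \mu$ is uniformly bounded. By the Banach–Alaoglu theorem (equivalently, reflexivity of the Hilbert space $H^1(\G)$), there is a subsequence, still denoted $\{u_n\}$, and a function $u \in H^1(\G)$ such that $u_n \rightharpoonup u$ weakly in $H^1(\G)$. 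This is exactly the asserted weak compactness.

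If one prefers not to invoke Proposition~\ref{apriori} as a black box, the same bound on $T(u_n)$ can be derived directly: from $E(u_n,\G) < 0$ one gets $\tfrac12 T(u_n) < \tfrac1p V(u_n) + \tfrac12 W(u_n)$; the Gagliardo–Nirenberg inequality \eqref{gn1} bounds $V(u_n) \le M_p \mu^{(p+2)/4} T(u_n)^{(p-2)/4}$ with exponent $(p-2)/4 < 1$ since $p<6$, while $W(u_n) \le \|w\|_\infty \mu$; a Young-inequality argument then absorbs the $V$-term and yields $T(u_n) \le C(\mu,\|w\|_\infty)$. Either way the conclusion is the same.

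There is essentially no serious obstacle here: the subcriticality $p<6$ is precisely what makes the kinetic term dominate in the energy and prevents $T(u_n)$ from running off to infinity, and the potential term is harmless because $0 \le W(u_n) \le \|w\|_\infty \mu$ is already bounded. The one point worth a word of care is the meaning of weak convergence on a non-compact graph with infinitely long edges: $H^1(\G)$ is a genuine Hilbert space with the norm defined in Section~\ref{sec:preliminaries}, so Banach–Alaoglu applies verbatim; no compactness of the embedding $H^1(\G)\hookrightarrow L^2(\G)$ is claimed or needed at this stage (indeed it fails), and that is exactly why the weak limit $u$ need not have mass $\mu$ — mass may leak to infinity. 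Establishing whether or not that happens is the business of the subsequent analysis, not of this proposition.
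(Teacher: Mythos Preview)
Your proposal is correct and matches the paper's approach: the paper establishes the $H^1$-bound by the direct Gagliardo--Nirenberg argument you give as your second variant (writing $E(u_n,\G) \ge \tfrac12 T(u_n) - \tfrac{M_p}{p}\mu^{(p+2)/4} T(u_n)^{(p-2)/4} - \tfrac{\mu}{2}\|w\|_\infty$ and using $(p-2)/4<1$), then invokes Banach--Alaoglu. Your first variant, quoting Proposition~\ref{apriori}, is a legitimate shortcut to the same bound.
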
  
\begin{proof} Using \eqref{gn1}, the fact that $2 < p < 6$ and a straightforward estimate for the potential 
term, we get the lower bound
%so 
%\begin{align}
%    \frac{1}{2} \|u'_n\|^2_{L^2(\mathcal{G})} - \frac{1}{p} \|u_n\|^p_{L^p(\mathcal{G})} \geq  \frac{1}{2} \|u'_n\|^2_{L^2(\mathcal{G})} - \zeta_p \|u'_n\|^{\frac{p}{2} - 1}_{L^2(\mathcal{G})}, \hspace{1cm} \zeta_p > 0 \nonumber
%\end{align}
\begin{equation}
E (u, \G) \ \geq \     \frac{1}{2} \|u'_n\|^2_{L^2(\mathcal{G})} - \frac {M_p} p \mu^{\frac p 4 + \frac 1 2} \|u'_n\|^{\frac{p}{2} - 1}_{L^2(\mathcal{G})} - \frac \mu 2 \|w\|_{\infty}, 
\nonumber
\end{equation}
so that $\| u'_n \|_{L^2 (\G)}$ is bounded,
otherwise the sequence $E(u_n, \G)$ would diverge 
as $p < 6$ and the sequence $u_n$ could not be minimizing.
Since $\| u_n \|_{L^2 (\G)} = \sqrt \mu,$
the sequence is bounded in $H^1 (\G)$ and then weakly compact
by the Banach-Alaoglu Theorem.
\end{proof}

In the next result we characterise the behaviour of weakly convergent minimizing sequences, to
which we reduce owing to Proposition
\ref{wcompact}.
\begin{proposition}
\label{minseq}
If $\{u_n\}$ is a minimizing sequence  for the functional $E(\cdot, \mathcal{G})$ at mass $\mu$, and $u_n \rightharpoonup u $ in $H^1(\mathcal{G})$, then 
one of the two following cases occurs:\\
(i) $u_n \rightarrow 0 $ in $L^{\infty}_{loc}(\mathcal{G})$ and $u \equiv 0$;   \\
(ii) $u \in H^1_{\mu}(\mathcal{G})$ is a minimizer and $u_n \rightarrow u$ strongly in $H^1(\mathcal{G}) \cap L^p(\mathcal{G})$. 
\end{proposition}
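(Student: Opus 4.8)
The plan is to run the classical vanishing/concentration dichotomy for the weakly convergent minimizing sequence, using the strict subadditivity of $\I_\G$ proved in Proposition \ref{subadditive} to exclude the escape of a nontrivial fraction of the mass to infinity along a halfline. First I would collect the compactness available on bounded pieces of $\G$: since $H^1 \hookrightarrow C$ compactly on every bounded metric graph, and the restriction of a weakly convergent sequence is weakly convergent, from $u_n \rightharpoonup u$ in $H^1(\G)$ one gets $u_n \to u$ uniformly on the compact core $\mathcal K$ and on every bounded portion of $\G$; in particular $u_n \to u$ pointwise on $\G$, and $W(u_n) \to W(u)$ because $w$ is continuous and supported on the bounded set $\mathcal K$. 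If $u \equiv 0$ this is already alternative (i), i.e. $u_n \to 0$ in $L^{\infty}_{loc}(\G)$. So from now on I assume $u \not\equiv 0$ and set $m := \|u\|_{L^2(\G)}^2 \in (0,\mu]$; the goal is to show $m = \mu$ and that $u$ is a minimizer.

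Next I would split the energy along the sequence. From $u_n \rightharpoonup u$ in $H^1(\G)$ one has $\|u_n'\|_{L^2}^2 = \|u'\|_{L^2}^2 + \|u_n' - u'\|_{L^2}^2 + o(1)$ and $\|u_n - u\|_{L^2}^2 \to \mu - m$, while the pointwise convergence together with the $H^1$-bound and the Brezis--Lieb lemma give $\|u_n\|_{L^p}^p = \|u\|_{L^p}^p + \|u_n - u\|_{L^p}^p + o(1)$; combined with $W(u_n) \to W(u)$ these yield
\begin{equation}
    E(u_n,\G) = E(u,\G) + E_{\NLS}(u_n - u,\G) + o(1).
    \nonumber
\end{equation}
Since $w \geq 0$ gives $E_{\NLS}(v,\G) \geq E(v,\G) \geq \I_\G(\|v\|_{L^2}^2)$ for every $v \in H^1(\G)$, and $\nu \mapsto \I_\G(\nu)$ is continuous on $(0,\infty)$ (concavity, from Proposition \ref{subadditive}), if $0 < m < \mu$ then, passing to the $\liminf$,
\begin{equation}
    \I_\G(\mu) = \lim_n E(u_n,\G) \ \geq\ E(u,\G) + \liminf_n E_{\NLS}(u_n - u,\G) \ \geq\ \I_\G(m) + \I_\G(\mu - m) \ >\ \I_\G(\mu),
    \nonumber
\end{equation}
a contradiction; hence $m = \mu$.

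With $m = \mu$ one has $\|u_n - u\|_{L^2} \to 0$, so the Gagliardo--Nirenberg inequality \eqref{gn1} and the $H^1$-bound force $\|u_n - u\|_{L^p} \to 0$, i.e. $u_n \to u$ in $L^p(\G)$. The energy splitting then reduces to $E(u_n,\G) = E(u,\G) + \tfrac12\|u_n' - u'\|_{L^2}^2 + o(1)$, whose left-hand side converges to $\I_\G(\mu) \leq E(u,\G)$ (as $u \in H^1_\mu(\G)$) while the right-hand side is $\geq E(u,\G) + o(1)$; hence $E(u,\G) = \I_\G(\mu)$ and $\|u_n' - u'\|_{L^2} \to 0$. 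Therefore $u$ is a minimizer and $u_n \to u$ strongly in $H^1(\G) \cap L^p(\G)$, which is alternative (ii).

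The step I expect to be the real obstacle is the exclusion of the intermediate regime $0 < m < \mu$: it hinges entirely on the strict subadditivity of $\I_\G$, and therefore --- through Proposition \ref{subadditive} --- on the uniform lower bound for $V(\cdot)$ along minimizing sequences furnished by Proposition \ref{apriori}. Without such a bound the dichotomy would genuinely fail, since a fixed fraction of the mass could drift off to infinity along a halfline while still carrying a nontrivial share of the energy; the remaining ingredients (Brezis--Lieb, weak lower semicontinuity of the kinetic term, and the Gagliardo--Nirenberg estimate) are routine.
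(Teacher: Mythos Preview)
Your proposal is correct and follows essentially the same route as the paper: a vanishing/dichotomy/compactness trichotomy, with dichotomy excluded via the strict subadditivity of $\I_\G$ from Proposition \ref{subadditive}, and with Brezis--Lieb supplying the energy splitting. The only cosmetic differences are that the paper applies Brezis--Lieb to the $W$-term as well (writing $E(u_n-u,\G)$ rather than $E_{\NLS}(u_n-u,\G)$, which is equivalent since $W(u_n-u)\to 0$), uses the interpolation $\|u_n-u\|_{L^p}^p \le \|u_n-u\|_{L^\infty}^{p-2}\|u_n-u\|_{L^2}^2$ in place of \eqref{gn1} for the $L^p$-convergence step, and phrases the final strong $H^1$-convergence through $\limsup\|u_n'\|_{L^2}\le\|u'\|_{L^2}$ plus weak lower semicontinuity rather than through the Hilbert identity $\|u_n'\|^2=\|u'\|^2+\|u_n'-u'\|^2+o(1)$ that you use.
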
 
\begin{proof}
%The boundedness of $\{u_n\}$ follows from Proposition 3.2.
The sequence $u_n$ converges weakly to $u$ in $L^2(\mathcal{G})$ too, so let \begin{equation}
    m = \mu - \|u\|^2_{L^2(\mathcal{G})} \in [0,\mu]
    \nonumber
\end{equation}
be the loss of mass in the limit. 
%Into the concentration-compactness principle, either $m = \mu$, $0<m<\mu$, or $m = 0$. 
If $m = \mu$, then case $(i)$
occurs because $u_n \rightarrow 0$  strongly in $L^{\infty}_{loc}(\mathcal{G})$ and so $u \equiv 0$. 

Furthermore, following \cite{Ast16}, we can state that the case $0<m<\mu$ never occurs. Indeed, 
first we note that
from  the fact that $u_n \rightharpoonup u$ in $L^2(\mathcal{G})$ one has $$  \| u_n - u \|^2_{L^2(\mathcal{G})} \ = \
\| u_n \|^2_{L^2 (\G)}
+ \| u \|^2_{L^2 (\G)} -
2 \Re (u_n, u) \ \to \ m.
$$

Moreover,
using Brezis-Lieb's lemma \cite{BL} one gets 
\textbf{\begin{equation}
    \frac{1}{p}\int_{\mathcal{G}} |u_n|^p dx - \frac{1}{p}\int_{\mathcal{G}} |u_n - u|^p dx - \frac{1}{p}\int_{\mathcal{G}} |u|^p dx = o(1),
    \nonumber
\end{equation}}
\begin{equation}
    \frac{1}{2} \int_{\mathcal{G}} |u'_n|^2 dx - \frac{1}{2} \int_{\mathcal{G}} |u'_n - u'|^2 dx - \frac{1}{2} \int_{\mathcal{G}} |u'|^2 dx = o(1) \nonumber ,
\end{equation}
\begin{equation}
\frac{1}{2} \int_{\mathcal{K}} w(x) |u_n|^2 dx -  \frac{1}{2} \int_{\mathcal{K}} w(x) |u_n - u|^2 dx - \frac{1}{2} \int_{\mathcal{K}} w(x) |u|^2 dx  = o(1) .\nonumber
\end{equation}

Therefore 
\begin{equation}
    E(u_n, \mathcal{G}) = E(u_n-u, \mathcal{G}) + E(u, \mathcal{G}) + o(1)
    \nonumber
\end{equation}
as $n \rightarrow \infty$. Now, 
 since by concavity $\I_{\mathcal{G}}$ is continuous, one obtains 
\begin{equation}
     \I_{\mathcal{G}}(\mu) \ \geq \ \I_{\mathcal{G}}(m) + E(u,{\mathcal{G}}) \ \geq \ \I_{\mathcal{G}}(m) + \I_{\mathcal{G}}(\|u\|^2_{L^2(\mathcal{G})}).
   \nonumber
\end{equation}
\indent From Proposition \ref{subadditive} the function $\I_{\mathcal{G}}$ is strictly subadditive, therefore if $0<m<\mu$ then
\begin{equation}
     \I_{\mathcal{G}}(\mu) < \I_{\mathcal{G}}(\mu -\|u\|^2_{L^2(\mathcal{G})}) + \I_{\mathcal{G}}(\|u\|^2_{L^2(\mathcal{G})})
     \nonumber
\end{equation}
which is a contradiction. \\
\indent If $m = 0$ then $u_n \rightarrow u$ strongly in $L^2(\mathcal{G})$. Now, since $p>2$ and $\|u_n\|_{L^{\infty}} \leq C$, from 
\begin{equation} \label{convV}
\| u_n - u \|_{L^p (\G)}^p \leq 
\| u_n - u \|_{L^\infty (\G)}^{p-2}
\| u_n - u \|_{L^2 (\G)}^2
\leq C \| u_n - u \|_{L^2 (\G)}^2 \to 0,
\end{equation}
one gets $u_n \to u$ strongly in $L^p (\G)$ too.

\noindent
\indent Furthermore
\begin{equation} \label{convW}\begin{split}
\int_{\mK} w (x) \left||u_n (x)|^2 - |u (x)|^2
\right| dx \ \leq & \ C 
\int_{\mK} w (x) |u_n (x) - u (x)|^2 dx  \\ \leq & \ C
\| w \|_\infty 
\| u_n - u \|^2_{L^2 (\G)} \to 0
\end{split}
\end{equation}
so $W(u_n)$ converges to $W(u)$.

%\noindent
Then, from Eq \eqref{convV}, Eq \eqref{convW}, and the fact
that $u_n$ is a minimizing sequence,
\begin{equation} \begin{split}
\| u_n' \|_{L^2 (\G)}^2 \ = & \ E (u_n, \G) + \frac{1}{p}
\| u_n \|_{L^p (\G)}^p + \frac 1 2 \int_{\mK} 
w (x) |u_n (x)|^2 dx \\ \to &  \ \I (\mu) + 
+ \frac{1}{p}
\| u \|_{L^p (\G)}^p + \frac 1 2 \int_{\mK} 
w (x) |u (x)|^2 dx \\
\leq & \  E (u, \G) + \frac{1}{p}
\| u \|_{L^p (\G)}^p + \frac 1 2 \int_{\mK} 
w (x) |u (x)|^2 dx \ = \ \| u' \|^2_{L^2 (\G)}.
\end{split}
    \end{equation}
where the last passage exploits $u \in H^1_\mu (\G).$ 
It follows 
\begin{equation}
    \label{kinu}
  \lim  \| u_n ' \|^2_{L^2 (\G)} \ \leq \
    \| u' \|^2_{L^2 (\G)}.
\end{equation}
\indent On the other hand, since $u'$ is the weak limit in
$L^2 (\G)$ of $u_n'$, it must be
\begin{equation}
    \label{kinubis}
  \liminf  \| u_n ' \|^2_{L^2 (\G)} \ \geq \
    \| u' \|^2_{L^2 (\G)}.
\end{equation}

From Eqs \eqref{kinu} and \eqref{kinubis} one concludes
$  \lim \| u_n ' \|^2_{L^2 (\G)} \ = \
    \| u' \|^2_{L^2 (\G)},$
so $u_n$ converges to $u$ strongly in $H^1 (\G)$,
and therefore $E (u, \G) = \lim E (u_n, \G)$ and $u$
is a Ground State for $E (\cdot, \G)$ at mass $\mu.$
The proof is complete.
\end{proof}

We are ready to prove
Lemma \ref{genex}.

\smallskip 

\noindent
{\em Proof of Lemma \ref{genex}.}
%{\rosso "more detail"?}
Given a minimizing sequence $\{ u_n \} \subset H^1_{\mu}(\mathcal{G})$ we need to exclude case $(i)$ of Proposition \ref{minseq}. Assume $(i)$. Then  $u_n \rightarrow 0$ in $L^{\infty}_{loc}(\mathcal{G})$, that implies $u_n \rightarrow 0$  in $L^{\infty}(\mathcal{K})$. Since the potential is supported on $\mathcal{K}$, then $E(u_n,\mathcal{G}) = E_{\NLS}(u_n, \mathcal{G}) + o(1)$ and one  can repeat the argument in Theorem 3.3 of  \cite{Ast16}. Such argument shows that such a minimizing sequence leads to an energy level not lower than the threshold $- \theta_p \mu^{2 \beta + 1}$.  Therefore, if there exists $v \in H^1_{\mu}(\mathcal{G})$ such that the condition (\ref{criterion}) is satisfied, then case $(i)$ of Proposition \ref{minseq} is ruled out, so case $(ii)$ is verified. Thus we can assume the existence of a minimizing sequence $\{u_n\}$ that strongly converges to $u$ in $H^1 \cap L^p(\mathcal{G})$ and $u$ is a Ground State. 
$\qedsymbol$

%Owing to this theorem we can state that if there is a competitor function in $H^1_{\mu}(\mathcal{G})$ such that its energy is lower than the energy of the soliton, then the graph $\mathcal{G}$ admits a ground state with mass $\mu$. Therefore, finding such competitoriklik provides a tool for proving the existence of these states. 

\section{Existence}

%We focus on graphs $\mathcal{G}$ with $n$ halflines. 
It is well known that a graph $\G$ %ground states of $E_{\NLS} (\cdot, \G)$ at
does not support ground states for $E_{\NLS} (\cdot, \G)$ at
any mass, provided that it satisfies the so-called Assumption H (see \cite{Ast15}), i.e., if every point of $\mathcal{G}$ belongs to a trail that contains two halflines. This is the case, for instance, of the graph in Figure (\ref{fig:2b}), called the 2-bridge.

\begin{figure}[h!]
 \centering
 \includegraphics[height = 4 cm ]{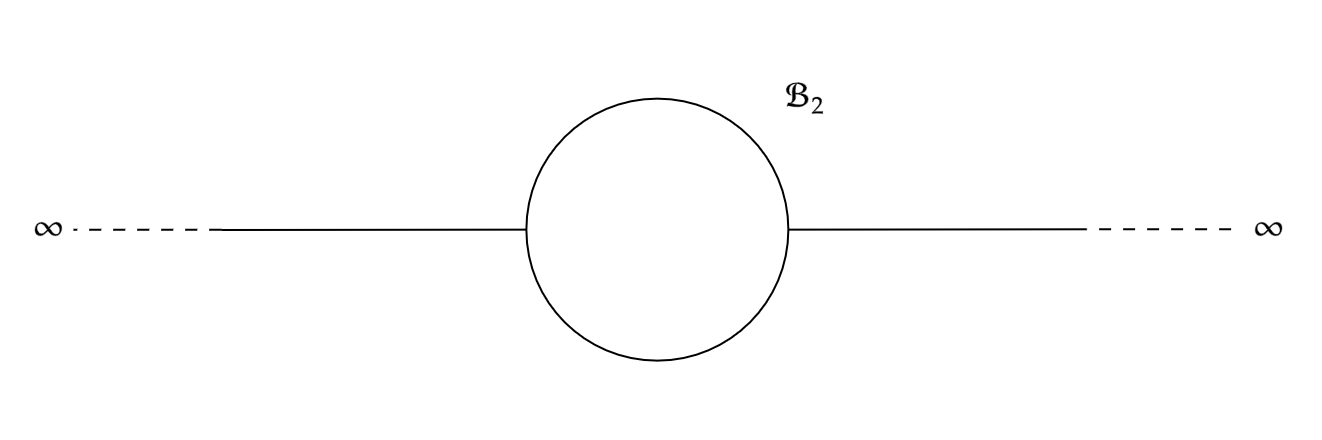}
        \caption{The 2-bridge graph satisfies the assumption (H) that prevents the
        existence of ground states for $E_{\NLS}$ at every mass.}
        \label{fig:2b}
\end{figure}

On the other hand, the presence
of an attractive potential  can make it energetically convenient to concentrate the mass in the support of the potential and therefore, at least for some values of the mass, ground states can exist. 
In the present section we
show that this is indeed the case 
if the mass is large or
small enough. We achieve the results by directly applying
Lemma \ref{genex}, i.e., exhibiting 
a function of mass $\mu$ whose energy is
below the threshold $- \theta_p \mu^{2 \beta + 1}$.

\subsection{Existence for large mass}
Here we give the first result on existence of ground states.
\begin{theorem}
\label{large mass}
Let $\mathcal{G}$ be a graph with $n \geq 1$ halflines and let 
 $w \geq 0$ be a continuous,
 non identically vanishing function supported on the compact core
$\mathcal{K}$ of $\G$.

If $\mu$ is large enough, then there exists a Ground State for $E(\cdot, \G)$ at mass $\mu$.
\end{theorem}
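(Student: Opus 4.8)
The plan is to apply the existence criterion of Lemma \ref{genex}: it suffices to exhibit, for every sufficiently large $\mu$, a competitor $v \in H^1_\mu(\mathcal{G})$ with $E(v, \mathcal{G}) \leq E_{\NLS}(\phi_\mu, \mathbb{R}) = -\theta_p \mu^{2\beta+1}$. The idea is to place a truncated, renormalized soliton of mass $\mu$ inside a region where $w$ is bounded below by a positive constant. Since $w$ is continuous, nonnegative, supported on $\mathcal{K}$, and not identically zero, the nonempty open set $\{w>0\}$ contains a point $x_0$ in the interior of a bounded edge of $\mathcal{G}$; by continuity there are $\delta>0$ and $c_0:=\tfrac12 w(x_0)>0$ such that the interval $J:=(x_0-\delta,x_0+\delta)$ lies in that edge and $w\geq c_0$ on $J$.

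Next I would build the competitor. Fix a smooth cutoff $\chi$ with $\chi\equiv 1$ on $(x_0-\delta/2,x_0+\delta/2)$, $\chi\equiv 0$ outside $J$, and $0\leq\chi\leq 1$. Let $\psi_\mu$ denote the soliton $\phi_\mu$ of Eq.\ \eqref{soliton} translated so as to be centered at $x_0$ in the edge coordinate, and set
\[
v_\mu \ := \ \sqrt{\mu}\,\frac{\chi\,\psi_\mu}{\|\chi\,\psi_\mu\|_{L^2}},
\]
extended by zero to the rest of $\mathcal{G}$; since $\chi\psi_\mu$ is compactly supported in the interior of an edge, $v_\mu\in H^1_\mu(\mathcal{G})$. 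The key point is that $\phi_\mu$ decays exponentially on the scale $\mu^{-\beta}\to 0$, so that $\psi_\mu$ and $\psi_\mu'$ are of order $e^{-c\mu^\beta\delta/2}$ times powers of $\mu$ outside $(x_0-\delta/2,x_0+\delta/2)$. Consequently $\|\chi\psi_\mu\|_{L^2}^2=\mu\,(1+O(e^{-c\mu^\beta\delta}))$, the renormalization constant is $1+O(e^{-c\mu^\beta\delta})$, and both the truncation and the renormalization perturb the kinetic and the nonlinear parts of the energy by a quantity $\epsilon(\mu)$ which, although multiplied by polynomial powers of $\mu$, still tends to $0$ as $\mu\to\infty$. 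Hence $E_{\NLS}(v_\mu,\mathcal{G})=E_{\NLS}(\phi_\mu,\mathbb{R})+\epsilon(\mu)=-\theta_p\mu^{2\beta+1}+\epsilon(\mu)$ with $\epsilon(\mu)\to 0$.

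Finally I would estimate the potential term: since $v_\mu$ is supported inside $J$ and has mass $\mu$, and $w\geq c_0$ on $J$,
\[
W(v_\mu) \ = \ \int_J w\,|v_\mu|^2\,dx \ \geq \ c_0\int_J |v_\mu|^2\,dx \ = \ c_0\,\mu,
\]
so that $E(v_\mu,\mathcal{G})=E_{\NLS}(v_\mu,\mathcal{G})-\tfrac12 W(v_\mu)\leq -\theta_p\mu^{2\beta+1}+\epsilon(\mu)-\tfrac{c_0}{2}\mu$. Since $2\beta+1>1$, the gain $-\tfrac{c_0}{2}\mu$ from the potential is of lower order than $\mu^{2\beta+1}$, but it is exactly what is needed: for $\mu$ large enough $\epsilon(\mu)<\tfrac{c_0}{2}\mu$, whence $E(v_\mu,\mathcal{G})<-\theta_p\mu^{2\beta+1}=E_{\NLS}(\phi_\mu,\mathbb{R})$, and Lemma \ref{genex} produces a ground state at mass $\mu$.

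I expect the only genuine work to be the error bookkeeping in the second step, namely quantifying through the exponential tails of $\phi_\mu$ that truncating at the fixed distance $\delta$ and renormalizing costs only $\epsilon(\mu)\to 0$ in the NLS energy. This quantitative point is essential: a crude bump of width $\sim\mu^{-\beta}$ would lose an amount $\sim(\theta_p-\theta'_p)\mu^{2\beta+1}$ in the NLS energy, which the $O(\mu)$ potential gain cannot absorb, so the competitor must approximate the true soliton to within an error that is $o(\mu)$ — precisely what the exponential decay delivers.
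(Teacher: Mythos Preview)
Your proposal is correct and follows essentially the same approach as the paper: both localize a mass-$\mu$ soliton inside a subinterval of an edge where $w$ is bounded below, use the exponential decay of $\phi_\mu$ on the scale $\mu^{-\beta}$ to show that truncation plus $L^2$-renormalization perturbs the NLS energy only by $o(1)$, and then use the potential gain $W(v_\mu)\geq c_0\mu$ to beat this error and invoke Lemma~\ref{genex}. The only cosmetic difference is that the paper truncates by subtracting the boundary value $\phi_\mu(\ell/2)$ and multiplying by a characteristic function, whereas you use a smooth cutoff $\chi$; both choices produce exponentially small errors for the same reason.
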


\begin{proof}
%{\rosso Questa descrizione è imprecisa. Bisogna partire da un edge dove $w$ è non nullo e stringere il solitone lì. Cerca di descrivere questa procedura in modo preciso.}
%In the large mass limit, a good ground state candidate is a function supported in the compact part of the graph where $w(x)$ is defined. 
%We construct this function by cutting the soliton symmetrically and rescaling its mass. We denote this function by $u$.
%Hence we aim at proving that 
%\begin{equation}
 %   E(u, \mathcal{G}) < E(\phi_{\mu}, \mathbb{R}) .
%\end{equation}
%where $\phi_{\mu}$ is the usual soliton with mass $\mu$ that describe the ground state of the energy functional in the line. 

\noindent
Consider a point $\bar x$ in $\mathcal K$ such that $\bar x$ is
not a vertex of $\G$ and $w (\bar x) > 0.$
Such a point exists, since otherwise by continuity $w$ would be zero  in all vertices too, and then identically zero. With a slight abuse of notation, we denote by $\bar x$ 
the coordinate of the point $\bar x$ as an element of the interval $I_{\bar e}$ that represents the edge $\bar e$ in which $\bar x$ lies. By continuity, there is $\ell > 0$ such that the interval $[\bar x - \ell/2, \bar x + \ell/2]$ belongs to $\bar e$ and $w > 0$
in $[\bar x - \ell/2, \bar x + \ell/2]$. We denote $\kappa: = \min_{[\bar x - \ell/2, \bar x + \ell/2]} w (x)$, so that $\kappa > 0.$

Consider now the family of functions $v_\mu \in H^1 (\R)$ defined 
 in the following
way:
\begin{equation}
    v_\mu (x) = (\phi_\mu (x - \bar x) - \phi_\mu (\ell/2))
    \chi_{[\bar x - \ell/2, \bar x + \ell/2] } (x) , 
    \nonumber
\end{equation}
where $\chi_A$ denotes the characteristic function of the real subset $A.$

\begin{figure}[h!]
 \centering
 \includegraphics[height = 4 cm ]{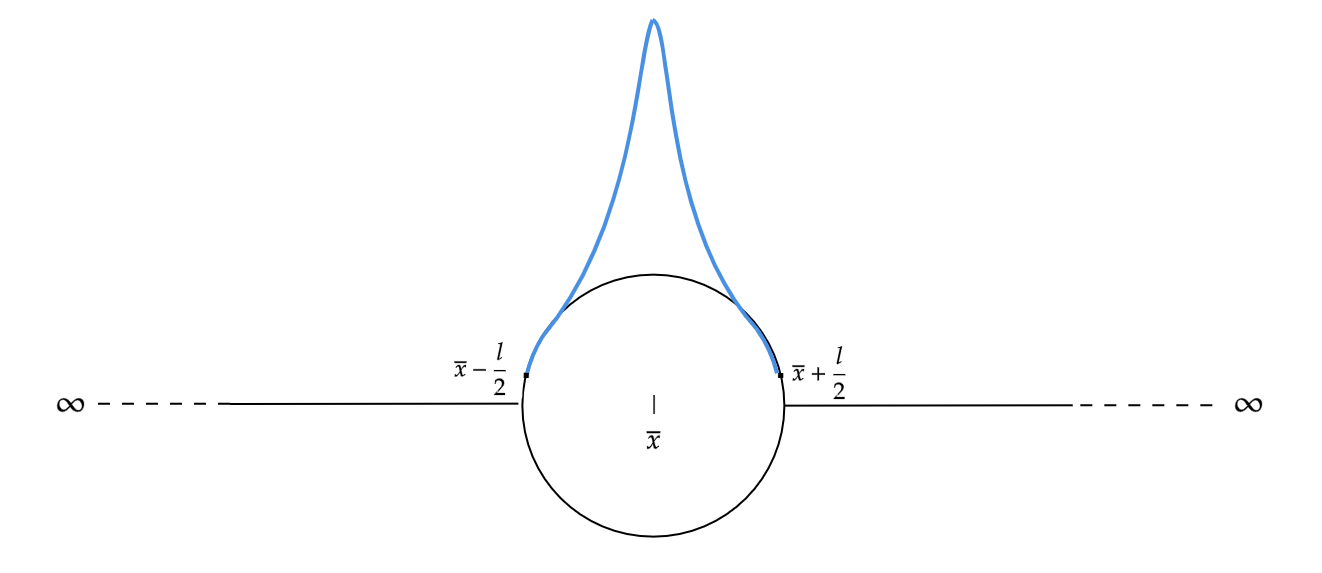}
        \caption{A pictorial representation of the function $\widetilde f_\mu$  on the 2-bridge graph.}
        \label{fig:big mass}
\end{figure}

%\noindent

One has
\begin{equation} \label{massv} \begin{split}
    \int_{\mathcal{\R}} |v_\mu|^2 dx  & =  \int_{-\frac{\ell}{2}}^{\frac{\ell}{2}} | \phi_{\mu} - \phi_{\mu}(\frac{\ell}{2})|^2 dx = 2 \int_{0}^{\frac{\ell}{2}} | \phi_{\mu} - \phi_{\mu}(\frac{\ell}{2})|^2 dx  \\
    & = 2 \int_{0}^{\frac{\ell}{2}}  \phi_{\mu}(x)^2 dx + 2 \int_{0}^{\frac{\ell}{2}} \phi_{\mu}(\frac{\ell}{2})^2dx - 4 \int_{0}^{\frac{\ell}{2}} \phi_{\mu}(x) \phi_{\mu}(\frac{\ell}{2}) dx  \\
  &  = \mu - 2 \int_{\frac{\ell}{2}}^{\infty} \phi_{\mu}(x)^2 dx+ \ell \phi_{\mu}(\frac{\ell}{2})^2 - 4 \phi_{\mu}(\frac{\ell}{2}) \int_{0}^{\frac{\ell}{2}} \phi_{\mu}(x) dx.
\end{split} \end{equation}

%If we use the large mass approximation $\mu \rightarrow \infty$, then
\noindent

From the explicit form of the soliton \eqref{soliton},
since $\ell > 0$ one immediately has 
$\phi_\mu (\ell/2) \leq  C \mu^\alpha e^{-c \mu^\beta}$. Moreover, since $2 \alpha - \beta = 1,$
\begin{equation} \label{coda}
 \int_{\frac{\ell}{2}}^{\infty} \phi_{\mu}(x)^2 dx \leq \phi_\mu (\frac \ell 2)   \int_0^{\infty} \phi_{\mu}(x) dx \leq C \mu  e^{-c \mu^\beta},
\end{equation}
where $C$ and $c$ are positive constants independent of
$\mu$.
Then,  recalling that $v_\mu$ is a cut and lowered version of $\phi_\mu$, and that in \eqref{massv} both
$\phi_\mu (\ell/ 2) $ and $\phi_\mu^2 (\ell/ 2) $ appear,
we conclude
\begin{equation}
    \label{massvmu}
   0 \leq \ \mu - \| v_\mu \|^2_{L^2 (\R)}
    \ \leq  \ C ( \mu^{2 \alpha} + \mu)  e^{-c \mu^\beta}.
\end{equation}

Furthermore, 
\begin{equation} 
    \label{v-phi}
    \| v_\mu - \phi_\mu (\cdot - \bar x) \|^2_{L^2 (\R)} = 2 \int_{0}^{\ell/2}
    \phi_\mu^2 (\ell/2) dx + 2
    \int_{\ell/2}^\infty \phi_\mu^2 (x) \, dx
    \
   \leq \ C (\mu^{2\alpha} + \mu) e^{-c \mu^\beta}  
    ,
\end{equation}
and
\begin{equation} 
    \label{v'-phi'}
    \| v_\mu' - \phi_\mu' (\cdot - \bar x) \|^2_{L^2 (\R)} =  2
    \int_{\ell/2}^\infty (\phi_\mu')^2 (x) \, dx
     \leq  C \mu^{2 \beta}
    \int_{\ell/2}^\infty 
    \phi_\mu^2 (x) \, dx  \leq  C \mu^{2 \beta + 1} e^{-c \mu^\beta}
 \end{equation}
where the last estimate is obtained from
inequality
$$ |\phi_\mu' (x) | \ = \ C \mu^{\alpha + \beta}
\frac{| \sinh (\mu^ \beta c_p x)|}{ \cosh^{\frac \alpha \beta +1} (\mu^\beta c_p x)}
\leq C \mu^{\beta} \phi_\mu (x)$$
and by Eq \eqref{coda}.

\noindent

Now we define the function
$f_\mu = \frac{\sqrt \mu}{\| v_\mu \|_{L^2 (\R)}} v_\mu$ and compute
\begin{equation} \label{fprime}
    \begin{split}
    \| f_\mu - v_\mu \|_{L^2 (\R)} & =
     {\sqrt \mu - \| v_\mu \|_{L^2 (\R)}} \ = \
     \frac{\mu - \| v_\mu \|_{L^2 (\R)}^2} 
     { \sqrt \mu + \| v_\mu \|_{L^2 (\R)}}  
     \ \leq \ C (\sqrt \mu + \mu^{2\alpha - \frac 1 2})
     e^{-c \mu^\beta } \\
    \| f_\mu' - v_\mu' \|_{L^2 (\R)} & 
    \leq  ({\sqrt \mu - \| v_\mu \|_{L^2 (\R)}} )
    \frac{\| v_\mu' \|_{L^2 (\R)}}{\| v_\mu \|_{L^2 (\R)}} 
      \leq \ C ( \sqrt \mu + \mu^{2\alpha - \frac 1 2}) e^{-c \mu^\beta} \frac{\| v_\mu' \|_{L^2 (\R)}}{\| v_\mu \|_{L^2 (\R)}}.
      \end{split} \end{equation}
      From \eqref{coda}
      $$ \| v_\mu \|_{L^2 (\R)} \geq \sqrt{\mu - C (\mu + \mu^{2 \alpha}) e^{-c \mu^\beta}}$$
      while, since $\| \phi_\mu' \|_{L^2 (\R)} \leq C \mu^{\beta + \frac 1 2}$, from \eqref{v'-phi'} one gets
      $$\| v_\mu' \|_{L^2 (\R)} 
      \ \leq \ \| \phi_\mu' \|_{L^2 (\R)} + C \mu^{\beta + \frac 1 2} e^{-c \mu^\beta} \ \leq \ C \mu^{\beta + \frac 1 2} 
      (1 + e^{-c \mu^\beta}) 
      .
      $$
     
      Thus, for the second inequality in \eqref{fprime} one gets
\begin{equation} \label{fsecond}
    \begin{split}
    \| f_\mu' - v_\mu' \|_{L^2 (\R)} & 
      \leq \ C ( \sqrt \mu+ \mu^{2\alpha - \frac 1 2}) e^{-c \mu^\beta} \frac{\mu^{\beta + \frac 1 2}  (1 +  e^{-c \mu^\beta})}
      {\sqrt{\mu - C (\mu + \mu^{2 \alpha}) e^{-c \mu^\beta}}},
      \end{split} \end{equation}
      
      %\ \sim \ C \mu^{2 \alpha} e^{-c \mu^\beta} \frac{\| \phi_\mu' \|_{L^2 (\R)}}{\| \phi_\mu \|_{L^2 (\R)}} \\ & \sim C \mu^{2 \alpha + \beta} e^{-c \mu^\beta},
    %\end{split}
%\end{equation}
\noindent
where we used Eqs \eqref{coda}\text{–}\eqref{v'-phi'}. %\eqref{massvmu}, \eqref{v-phi}, and 
From Eqs \eqref{fprime} and \eqref{fsecond} one then concludes that
both $\| f_\mu - v_\mu \|_{L^2 (\R)}$ and $\| f'_\mu - v'_\mu \|$ vanish as $\mu$ goes to infinity.

\noindent

Furthermore, from Eqs \eqref{gn1} and \eqref{gn2}
\begin{equation}
     \| f_\mu - v_\mu \|_{L^p (\R)}^p \leq C   \| f_\mu' - v_\mu' \|_{L^2 (\R)}^{\frac{p}{2}- 1} 
      \| f_\mu - v_\mu \|_{L^2 (\R)}^{\frac{p}{2}+ 1} 
     \to 0,
     \quad \mu \to \infty.
\label{lp}
\end{equation}

Let us introduce the function $\widetilde f_\mu,$ defined as $f_\mu$ on the edge $\bar e$ and zero 
on all other edges. Obviously, 
$\widetilde f_\mu$ belongs to $H^1 (\G)$
and its $L^2 (\G), \, L^p (\G),$ and $H^1 (\G)$ norms are the same as the corresponding ones of $f_\mu$
as a function on $\R.$ 
Now, from Eqs \eqref{fprime} and \eqref{v'-phi'} one has
\begin{equation} \begin{split}
    \label{cinetica}
  \left|  \| \widetilde f_\mu'\|_{L^2 (\G)} - \| \phi_\mu'\|_{L^2 (\R)} \right| & =  \left| \|  f_\mu'\|_{L^2 (\R)} - \| \phi_\mu'\|_{L^2 (\R)} \right| 
  \leq 
  \| f_\mu' - \phi_\mu' \|_{L^2 (\R)}
\\ &  \leq \| f_\mu'
    -  v_\mu'\|_{L^2 (\R)}  + \| v_\mu'- \phi_\mu'\|_{L^2 (\R)}
    \to 0, \quad \mu \to \infty, 
    \end{split}
\end{equation}
and analogously from Eq \eqref{lp}
\begin{equation}
    \label{potenziale}
    \| \widetilde f_\mu\|_{L^p (\G)}^p - \| \phi_\mu\|_{L^p (\R)}^p   \to 0, \quad \mu \to \infty.
\end{equation}

By Eqs \eqref{cinetica} and {\eqref{potenziale}}
\begin{equation} \begin{split}
    E (\widetilde f_\mu, \G) - E_{\NLS} ( \phi_\mu, \R) \  & 
= \   E_{\NLS} (\widetilde f_\mu, \G) - E_{\NLS} ( \phi_\mu, \R)
- \frac 1 2 \int_\G w (x) | \widetilde f_\mu (x)|^2 dx
    \\
&    \leq \  - \frac \kappa 2 \int_{-\ell/2}^{-\ell/2}
    \phi_\mu^2 (x) \, dx + o (1)
    \\ &  \leq \  - \frac \kappa 2  \mu + o (1)
    , \quad \mu \to \infty,
    \end{split}
    \nonumber
\end{equation}
so 
$E (\widetilde f_\mu, \G) < E_{\NLS} ( \phi_\mu, \R)$ for $\mu$ large enough and by Lemma \ref{genex} the proof is complete.
\end{proof}

\subsection{Existence for small mass}
Here we give the second result on existence of ground states.
\begin{theorem}
\label{small mass}
%Let $\mathcal{G}$ be a graph with $n$ halflines $\mathcal{H}_n$ and a compact $\mathcal{K}$ where the potential $w(x)$ is defined. If $\mu > 0 $ is small enough, then a ground state always exists. 
Let $\mathcal{G}$ be a graph with $n \geq 1$ infinite edges and let 
 $w \geq 0$ be a continuous,
 non identically vanishing function supported on the compact core
$\mathcal{K}$ of $\G$.

If $\mu$ is small enough,
then there exists a Ground State for $E(\cdot, \G)$ at mass $\mu$.
\end{theorem}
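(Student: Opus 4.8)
The plan is to apply the existence criterion of Lemma \ref{genex}: it suffices to exhibit a single competitor $v \in H^1_\mu(\mathcal{G})$ with $E(v,\mathcal{G}) \le E_{\NLS}(\phi_\mu,\mathbb{R}) = -\theta_p \mu^{2\beta+1}$. The heuristic is that an attractive potential supplies an energy gain of order $\mu$ (the ``linear'' gain coming from the term $W$), whereas the soliton threshold is of order $\mu^{2\beta+1}$ with $2\beta+1 = \frac{p+2}{6-p} > 1$ for $2<p<6$; hence for small $\mu$ the linear gain wins and the criterion is met. Concretely, I would look for a profile $\psi \in H^1(\mathcal{G})$ with $\|\psi\|_{L^2(\mathcal{G})}^2 = 1$ and strictly negative ``linear energy'' $T(\psi) - W(\psi) < 0$, and then test with $v = \sqrt{\mu}\,\psi$.

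First I would produce such a $\psi$. Since $w$ is continuous, nonnegative and not identically zero on $\mathcal{K}$, one has $\int_{\mathcal{K}} w\,dx > 0$. For $L>0$ let $\psi_L$ be the function equal to $1$ on all of $\mathcal{K}$, decaying linearly from $1$ to $0$ along each halfline over arclength $L$, and vanishing on the halflines beyond distance $L$; then $\psi_L \in H^1(\mathcal{G})$ has compact support, $T(\psi_L) = n/L$, $W(\psi_L) = \int_{\mathcal{K}} w\,dx$, and $\|\psi_L\|_{L^2(\mathcal{G})}^2 = |\mathcal{K}| + nL/3$. Choosing $L$ large enough makes $T(\psi_L) - W(\psi_L) < 0$; normalizing, set $\psi := \psi_L/\|\psi_L\|_{L^2(\mathcal{G})}$, so that $\|\psi\|_{L^2(\mathcal{G})}^2 = 1$, $V(\psi) = \|\psi\|_{L^p(\mathcal{G})}^p < \infty$, and $c := \tfrac12\big(W(\psi) - T(\psi)\big) > 0$. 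Equivalently, this computation shows that the self-adjoint operator $-\Delta - w$ on $\mathcal{G}$ with natural (Kirchhoff) vertex conditions has strictly negative bottom of the spectrum; as the compactly supported potential is a relatively compact perturbation of $-\Delta$, whose spectrum is $[0,+\infty)$, there is a negative eigenvalue, and one may alternatively take $\psi$ to be the associated normalized positive $L^2$-eigenfunction, i.e. the linear ground state from which the nonlinear ground states of \cite{Noja_2017} bifurcate.

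Next I would simply evaluate the energy of $v = \sqrt{\mu}\,\psi \in H^1_\mu(\mathcal{G})$:
\begin{equation}
E(v,\mathcal{G}) \ = \ \frac{\mu}{2}\,T(\psi) - \frac{\mu^{p/2}}{p}\,V(\psi) - \frac{\mu}{2}\,W(\psi) \ = \ -c\,\mu \ - \ \frac{\mu^{p/2}}{p}\,V(\psi) \ \le \ -c\,\mu .
\nonumber
\end{equation}
Since $2\beta+1>1$, for every $\mu$ with $\mu^{2\beta} \le c/\theta_p$ one has $-c\,\mu \le -\theta_p\,\mu^{2\beta+1} = E_{\NLS}(\phi_\mu,\mathbb{R})$, so $v$ satisfies \eqref{criterion}. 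Lemma \ref{genex} then yields a Ground State for $E(\cdot,\mathcal{G})$ at mass $\mu$, for all sufficiently small $\mu$.

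I do not expect a genuine obstacle: the argument reduces to the elementary fact that an attractive $w\not\equiv 0$ makes the linear energy negative along a spread-out plateau profile, together with the exponent inequality $2\beta+1>1$. The only points requiring a little care are checking that the plateau profile is a legitimate element of $H^1(\mathcal{G})$ (continuity at the vertices joining $\mathcal{K}$ to the halflines, which holds since both pieces take value $1$ there) and, if one prefers the spectral formulation, invoking relative compactness of the perturbation to obtain an honest $L^2$-eigenfunction rather than merely an almost-minimizer of the linear Rayleigh quotient; neither is delicate.
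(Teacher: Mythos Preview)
Your proof is correct and follows the same overall strategy as the paper---verify the criterion of Lemma~\ref{genex} by exhibiting a competitor below the soliton threshold---but the choice of competitor is genuinely different. The paper places a half-soliton $\phi_m$ on every halfline and the constant value $\phi_m(0)$ on $\mathcal{K}$, then tunes the auxiliary mass parameter $m$ so that the total mass equals $\mu$; the key inequality becomes a comparison of several powers of $m$, and one must check that $2\alpha$ is the smallest exponent present. You instead fix a single profile $\psi$ with negative linear energy (a plateau with linear tails, or equivalently the linear ground state of $-\Delta - w$) and simply scale by $\sqrt{\mu}$; the comparison then reduces to $\mu$ versus $\mu^{2\beta+1}$. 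Your route is more elementary and makes the mechanism (the linear spectral gap of $-\Delta - w$ beating the superlinear soliton energy) transparent, and it connects directly to the bifurcation picture of \cite{Noja_2017}. The paper's choice, by contrast, stays closer to the soliton profile and might in principle yield a sharper mass threshold, though that is not exploited there.
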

\begin{proof}
Let $\mu > 0$.
We define the function $u_\mu$ as follows:
\begin{equation}
u_\mu (x) = 
    \begin{cases}
       \phi_{m}  \hspace{9.5mm} $if$ \hspace{3mm} x \in \mathcal{H}_i, \ i = 1, \dots, n \\
       \phi_{m}(0)   \hspace{5mm} $if$ \hspace{3mm} x \in \mathcal{K},
    \end{cases}
    \nonumber
\end{equation}
where $\mathcal{H}_i$ represents the halfline associated with the index $i$.
\begin{figure}[h!]
 \centering
 \includegraphics[height = 4 cm ]{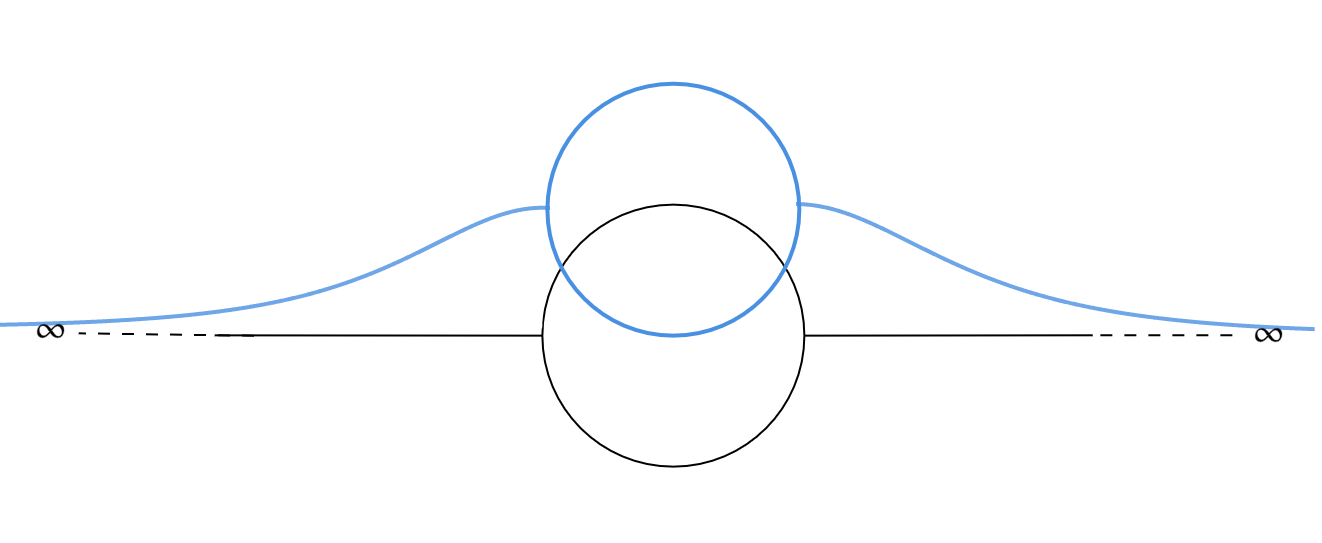}
        \caption{A representation of the function $u_\mu$ on the two-bridge graph.}
        \label{fig:small mass}
\end{figure}
The parameter $m$ is uniquely determined by imposing $\|u_\mu\|^2_{L^2 (\R)} = \mu$, namely by the identity
\begin{equation}
    \mu = \int_{\mathcal{G}} |u_\mu|^2 dx = \frac n 2 m + |\phi_{m}(0)|^2 |\mathcal{K}| = \frac n 2 m  + C_p^2 |\mathcal{K}| m^{2 \alpha},
    \label{ehs}
    \nonumber
\end{equation}
where $|\mathcal{K}|$ is the total length of $\mathcal{K}$, while the energy of $u_\mu$ reads
\begin{equation}
%    E^c(u) = - N 2^{2 \beta} \theta_p m^{2 \beta +1} -  \frac{\gamma^2}{4} C m^{2 \alpha} l
E(u_\mu,\G) = - \frac{n}{2} \theta_p m^{2 \beta + 1} - \frac{1}{p} C_p^p m^{p \alpha} |\mathcal{K}| -  C_p^2 m^{2 \alpha} \frac{|\mathcal{K}|} 2 \int_{\mathcal{K}}w(x)dx .
    \nonumber
\end{equation}

In order to be less energetic than the soliton on the line with the same mass, $u_\mu$ must satisfy the
condition 
\begin{equation} \label{smallineq}
 - \frac{n}{2} \theta_p m^{2 \beta + 1} - \frac{1}{p} C_p^p |\mathcal{K}| m^{p \alpha}  - C_p^2 \frac{|\mathcal{K}|} 2 m^{2 \alpha} \int_{\mathcal{K}}w(x)dx  < - \theta_p ( \frac{n}{2} m  + C_p^2 {|\mathcal{K}|}  m^{2 \alpha} )^{2 \beta + 1}. 
\end{equation}

%Since $2 \alpha = \beta +1$ and $p \alpha = 3 \beta + 1$, we can use the following set of inequalities:

%($2 \alpha = \beta + 1$, $p \alpha = 3 \beta + 1$)

\noindent

Since $2 \alpha$ is the smallest exponent in inequality Eq \eqref{smallineq}, it
turns out that the inequality is satisfied if $\mu$ is small enough, thus by Lemma \ref{genex} a Ground State exists and the theorem is proved.

%\begin{equation}
%C_p^2 L \int_{\mathcal{K}}w(x)dx > ((\frac{n}{2})^{2 \beta + 1} - \frac{n}{2}) \theta_p m^{\beta} + o(m^{\beta}).
%\end{equation}
\end{proof}

\section{Nonexistence}
Here we extend the nonexistence result given in {Theorem 5.1} of \cite{Ast16} to the case
of the presence of a weak, attractive, compactly supported potential and give sufficient conditions for the nonexistence
of ground states in some intervals
of the values of the mass. 

%We preliminary recall a basic 
%estimate for functions defined
%on a compact set $\mathcal K \subset \G$ (\cite{EstimatesHaesler}):
%\begin{equation}
 %   \|u\|_{L^{\infty}(\mathcal{K})} \leq |\mathcal{K}|^{-\frac{1}{2}} \|u\|_{L^2(\mathcal{K})} + \text{diam}(\mathcal{K})^{\frac{1}{2}} \|u'\|_{L^2(\mathcal{K})}, \hspace{0.5cm} \forall u \in H^1(\mathcal{K})
  %  \label{sobolev}
%\end{equation}
%where $|\mathcal K|$ is the total length of $\mathcal{K}$ and $\text{diam}(\mathcal{K})$ is its diameter.  \\
\begin{theorem}
\label{th:nonex}
Let $\G$ be a graph with $n \geq 1$ infinite edges
and let 
 $w \geq 0$ be a continuous,
 non identically vanishing function supported on the compact core
$\mathcal{K}$ of $  \G$.
Furthermore
denote by $|\mathcal K|$  the total length of $\mathcal{K}$ and by $\rm{diam}(\mathcal{K})$ its diameter, i.e. the maximal distance between any pair of points of $\mathcal{K}$.  

Then there exists a number $\epsilon > 0$, that depends on $p$ only, such that, if 
%the 
%conditions
\begin{equation} \begin{split}
%\mu^{2\beta} > & \frac{ 2 |\mathcal K| \| w \|_\infty}{\theta_p} , \quad
    \max %\begin{Bmatrix}
    \left( \mu^{\beta} {\rm{diam}}(\mathcal{K}), \frac{1}{\mu^{\beta} |\mathcal{K}|}  %\end{Bmatrix} 
    ,
    \frac{ \| w \|_\infty}{\mu^{2 \beta}}
    \right)
    <  \epsilon
    \label{theor}
    \end{split}
\end{equation}
are satisfied,
then the functional $E (\cdot,\mathcal{G})$ defined in \eqref{nlsc} has no Ground State at mass $\mu$. 
\end{theorem}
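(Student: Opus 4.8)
The plan is to argue by contradiction: suppose a Ground State $u$ exists at mass $\mu$ under the smallness hypotheses \eqref{theor}. First I would replace $u$ by $|u|$ and then by its monotone rearrangement $u^* = |u|^*$ on the interval $I^* = [0, |\G|)$, using Proposition \ref{rearrangements}: this preserves all $L^r$ norms (hence the mass and the term $V(u)$) and does not increase the kinetic energy $T(u)$. The subtlety is the potential term $W$: since $w$ is supported on $\mathcal K$ and $u^*$ lives on $I^*$, I would bound $W(u) \le \|w\|_\infty \int_{\mathcal K} |u|^2 \le \|w\|_\infty |\mathcal K| \, \|u\|_{L^\infty}^2$, and then control $\|u\|_{L^\infty} = \|u^*\|_{L^\infty}$. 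Because the third entry of \eqref{theor} forces $\|w\|_\infty/\mu^{2\beta}$ small, the hypothesis \eqref{smallW} of Proposition \ref{apriori} holds, so the \eqref{largemu} estimates apply to the Ground State: in particular $\|u\|_{L^\infty}^2 \le C_3 \mu^{\beta+1}$ and $T(u) \le C_3 \mu^{2\beta+1}$. This makes $W(u) \le C \|w\|_\infty |\mathcal K| \mu^{\beta+1}$, a quantity that will turn out to be a genuinely lower-order correction.

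The core of the argument is the same comparison used in Theorem 5.1 of \cite{Ast16}: for the rearranged competitor on the halfline, one compares its energy with $E_{\NLS}(\phi_\mu, \R) = -\theta_p \mu^{2\beta+1}$. Concretely, because the Ground State satisfies \eqref{criterion1}-type bounds, $\mathcal I_\G(\mu) = E(u,\G) \le -\theta_p\mu^{2\beta+1}$ by Remark \ref{gs}; the goal is to show the reverse strict inequality $E(u,\G) > -\theta_p\mu^{2\beta+1}$, which is the contradiction. Using $E(u,\G) = E_{\NLS}(u^*,I^*) - \frac12 W(u) \ge E_{\NLS}(u^*, I^*) - \tfrac12 W(u)$ wait — rearrangement only lowers $T$, so $E_{\NLS}(u^*,I^*) \le E_{\NLS}(|u|,\G)$, hence I must be careful about direction. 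The right way, following \cite{Ast16}: on a graph satisfying the relevant structural condition, a monotone function on $[0,|\G|)$ that is not the half-line soliton restriction must have strictly larger NLS energy than the soliton once one accounts for the fact that the "excess mass" sitting in the compact core (a region of length $|\mathcal K|$, which by the second entry of \eqref{theor} is large compared to $\mu^{-\beta}$, the soliton's natural width) is penalized. The quantitative estimate compares $E_{\NLS}$ of the optimal monotone profile constrained to put mass $\ge$ some amount on $[0,|\mathcal K|)$ against the free soliton, yielding a gain bounded below by a positive multiple of $\mu^{2\beta+1}$ times a function of the dimensionless ratios $\mu^\beta\mathrm{diam}(\mathcal K)$ and $(\mu^\beta|\mathcal K|)^{-1}$; smallness of these (first two entries of \eqref{theor}) makes this gain dominate the potential loss $\tfrac12 W(u) \le C\|w\|_\infty|\mathcal K|\mu^{\beta+1}$, which the third entry of \eqref{theor} makes small relative to $\mu^{2\beta+1}$.

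The main obstacle is making the comparison quantitative in the presence of the potential: one must show that the energy \emph{gain} from rearrangement-plus-structural-obstruction (established in \cite{Ast16} without potential) strictly exceeds the energy \emph{loss} $\tfrac12 W(u)$ from the attractive term. This requires (i) extracting from the proof of Theorem 5.1 in \cite{Ast16} an explicit lower bound on $E_{\NLS}(u,\G) + \theta_p\mu^{2\beta+1}$ in terms of $\mu^\beta\mathrm{diam}(\mathcal K)$ and $(\mu^\beta|\mathcal K|)^{-1}$ — roughly, the "wasted" kinetic energy of a monotone rearrangement spread over a long compact core — and (ii) checking that this bound is of order $\mu^{2\beta+1}\cdot(\text{small but not negligible})$ while $W(u) = O(\|w\|_\infty |\mathcal K| \mu^{\beta+1})$ is, under \eqref{theor}, of strictly smaller order. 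Choosing $\epsilon$ depending only on $p$ so that all three competing quantities are simultaneously controlled — balancing the two geometric smallness conditions against the potential smallness — is where the delicate bookkeeping lies, but it is bookkeeping rather than a new idea, since every ingredient (Proposition \ref{apriori}, Proposition \ref{rearrangements}, and the soliton comparison of \cite{Ast16}) is already available.
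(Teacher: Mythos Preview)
There is a genuine gap in how you control $W(u)$. Your bound $W(u)\le\|w\|_\infty\,|\mathcal K|\,\|u\|_{L^\infty}^2$ is useless here: the hypotheses \eqref{theor} impose only a \emph{lower} bound on $|\mathcal K|$ (namely $|\mathcal K|>(\epsilon\mu^\beta)^{-1}$), no upper bound, so the right-hand side can be arbitrarily large (think of the $n$-fork example with many short edges). More seriously, even the cleaner bound $W(u)\le\|w\|_\infty\,\mu\le\epsilon\mu^{2\beta+1}$ would not close the argument, because the quantitative gain produced by the \cite{Ast16} comparison is \emph{not} a fixed multiple of $\mu^{2\beta+1}$: what that proof actually delivers (for $n=1$) is
\[
E_{\NLS}(u,\G)-E_{\NLS}(\phi_\mu,\R)\ \ge\ c\,\mu^{2\beta}\!\int_{\mathcal K}|u|^2\,dx\ +\ c'\!\int_{\mathcal K}|u'|^2\,dx\ -\ C\mu^\beta\|u\|_{L^\infty(\mathcal K)}^2,
\]
and $\int_{\mathcal K}|u|^2$ may be much smaller than $\mu$. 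The paper's move is to bound $W(u)\le\|w\|_\infty\int_{\mathcal K}|u|^2$ and absorb it \emph{directly} into the first term on the right via $\|w\|_\infty<\epsilon\mu^{2\beta}$, so the coefficient becomes $c-\epsilon/2>0$. The potential loss and the structural gain must be compared locally on $\mathcal K$, not through global mass bounds.

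You also skip the reduction from $n>1$ halflines to $n=1$, which is not automatic and is where rearrangement actually enters. The paper does \emph{not} rearrange $u$ globally: it leaves $u$ on $\mathcal K$ untouched (so $W(u)$ is preserved exactly) and rearranges only the restrictions of $u$ to the $n$ halflines onto a single halfline, after attaching them at value-matching points to the halfline carrying $\max_{\G\setminus\mathcal K}u$. This yields a competitor $v\in H^1_\mu(\G')$ on a one-halfline graph $\G'=\mathcal K\cup\mathcal H_{\widetilde\imath}$ with $E(v,\G')\le E(u,\G)\le-\theta_p\mu^{2\beta+1}$, contradicting the already-established $n=1$ case. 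A global monotone rearrangement onto $[0,|\G|)$, as you first propose, lowers the NLS energy (the wrong direction, as you noticed) and erases the compact core on which the entire comparison depends.
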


\begin{proof}  %The proof of points (ii) and (iii) does not present significant difference with respect to that of Theorem 5.1 in \cite{Ast16}.
 We proceed by contradiction, thus we consider $\mu > 0$ that 
satisfies the condition Eq \eqref{theor} and suppose that there exists 
a Ground State $u$ at mass $\mu$ for $E (\cdot, \G)$. Due to the invariance of $E (\cdot, \G)$ under multiplication by a phase, we can assume without loss of generality that $u$ is real and nonnegative (see e.g., Sec. 1 in \cite{Ast15}).

First we assume that $\G$ contains only one halfline, i.e.  $n = 1$.
Taking $\epsilon < \theta_p / 2$, condition Eq \eqref{theor}
guarantees that inequality Eq \eqref{smallW} holds, so that estimates Eq \eqref{largemu} are valid. 
 It is therefore possible to
follow the proof of Theorem 5.1 in \cite{Ast16} replacing $E(\phi_\mu, \mathbb{R})$ by $E_{\NLS}(\phi_\mu, \mathbb{R})$ up to the
last inequality, which has to
be rephrased { including the contribution of the potential}, i.e.,
 \begin{equation} \begin{split}
 \label{final}
%\begin{align*}
   & E(u, \mathcal{G}) - E_{\NLS}(\phi_{\mu}, \mathbb{R})   \\
   \geq &
 % & \geq \left(C^{-1} - C \epsilon^{\frac{p-2}{2}}\right) \mu^{2\beta} \int_{\mathcal{K}} |u|^2 dx - C \mu^{\beta} \phi_m(y)^2 + \frac{1}{2} \int_{\mathcal{K}} |u'|^2 dx - \|w\|_{2} \|u\|_{L^{\infty}(\mathcal{K})} \\
    \left(C_{1} - C_2 \epsilon^{\frac{p-2}{2}}\right) \mu^{2\beta} \int_{\mathcal{K}} |u|^2 dx + \frac{1}{2} \int_{\mathcal{K}} |u'|^2 dx - C_3 \mu^{\beta}  \|u\|^2_{L^{\infty}(\mathcal{K})} - \frac 1 2
    \int_{\mathcal K} w (x) | u |^2 dx \\
    \geq & \left(C_{1} - C_2 \epsilon^{\frac{p-2}{2}}\right) \mu^{2\beta} \int_{\mathcal{K}} |u|^2 dx + \frac{1}{2} \int_{\mathcal{K}} |u'|^2 dx - C_3 \mu^{\beta}  \|u\|^2_{L^{\infty}(\mathcal{K})} - \frac 1 2 \| w \|_{\infty} 
    \int_{\mathcal K}  | u |^2 dx \\
    \geq &
    \left(C_{1} - C_2 \epsilon^{\frac{p-2}{2}}
    - \frac \epsilon 2 \right) \mu^{2\beta} \int_{\mathcal{K}} |u|^2 dx + \frac{1}{2} \int_{\mathcal{K}} |u'|^2 dx - C_3 \mu^{\beta}  \|u\|^2_{L^{\infty}(\mathcal{K})}  .
    \end{split}
    \nonumber
     \end{equation}

Using formula (40) in \cite{Ast16} to estimate $\|u\|_{L^{\infty}(\mathcal{K})}$ one finds 
 \begin{equation} \begin{split}
 \label{finalfinal}
%\begin{align*}
   & E(u, \mathcal{G}) - E_{\NLS}(\phi_{\mu}, \mathbb{R})   \\
   \geq & 
   \left(C_{1} - C_2 \epsilon^{\frac{p-2}{2}}
    - \frac \epsilon 2  - 2 C_3 \epsilon \right) \mu^{2\beta} \int_{\mathcal{K}} |u|^2 dx + \left(\frac{1}{2} - 2 C_3 \epsilon \right)
    \int_{\mathcal{K}} |u'|^2 dx 
\\ > & \ 0,
\end{split}
\nonumber
\end{equation}
uniformly in $\mu$ and provided that $\epsilon$ is small enough. By Remark \ref{gs} this contradicts the fact that $u$ is a Ground State of $E(\cdot, \G)$, and the proof is complete for the case $n=1$. 

Suppose now  $n > 1$. % then
%we proceed by supposing that 
%$E(\cdot, \G)$ admits a Ground State at a given mass $\mu$ that satisfies \eqref{theor}.
Let $\mathcal H_{\widetilde \i}$ be the halfline in which  $u$ attains
$\max_{\G \symbol{92}  \mathcal K} u$, and let us call $\widetilde x$ the corresponding maximum point on ${\mathcal H}_{\widetilde \i}.$ 
Moreover, let us define the halfline $\wmH$ as the subset of $\mH_{\wi}$ corresponding to the coordinate interval $[ \widetilde x, + \infty).$
It is convenient to use on $\wmH$ the coordinate system inherited by $\mH_{\wi}$ ranging from $\widetilde x$ to $+\infty.$

For any $i \neq \widetilde \i$   let us set
\begin{equation} \label{reconnect}
\wy_i = \min \{ x \in [\widetilde x, + \infty),  \,
 u_{\widetilde \i} (x) = u_i (0)\},
\end{equation}
which is well-defined since 
by definition of $\widetilde x,$ for every $i \neq \widetilde \i$ one has $u_i (0) \leq u_{\widetilde {\i}} (\widetilde x)$, thus by continuity of
$u_{\widetilde {\i}}$ there exists at least a point $\widetilde z \in 
\wmH$ such that $u_{\widetilde {\i}} (\widetilde z) = u_i (0).$ The symbol $\wy_i$ denotes then the minimum of such $\widetilde z$'s. 

Now for every $i \neq \wi$ we attach the origin of $\mH_i$ to the  point of coordinate $\wy_i$ in the halfine   $\wmH$. We obtain in this way a graph $\whG$, made of one halfline ($\wmH$), to which are attached by their origins $n-1$ other halflines ($\mH_i, \ i \neq \wi$).

Let us consider the function $\widehat u : \widehat \G \to {\R},$ which is made of the restrictions of $u$ to the halflines
that constitute the graph $\widehat \G$. In symbols, $\widehat u =
( \widehat u_{\wi},\widehat u_i)_{i \neq {\wi}},$ with
$
\widehat u_{\wi} (x) =  u_{\wi} (x)$ for every $x \in [\widetilde x, + \infty)$ and
$\widehat u_i (x) = u_{i} (x)$ for every $x \in [0, + \infty).$ 
Since the restriction of $\widehat u$ to every halfline is in $H^1$,
and since by definition of the points $\wy_i$ the function $\widehat u$ is continuous at the vertices of $\whG,$ it follows
that $\widehat u \in H^1 (\whG)$. Moreover, $\whG$ is connected and therefore one can apply Proposition \ref{rearrangements} and then define
the monotone rearrangement $u^*$ of $\widehat u$, that is defined on $[0, + \infty).$

Now we define the graph $\G'$ as the original graph $\G$, but with $\mH_{\wi}$ as the only 
halfline, i.e.,
$$ \G' = \G \symbol{92} 
(\cup_{i \neq \widetilde \i} \mathcal H_i) = \mathcal K \cup \mathcal H_{\widetilde \i},$$
and construct on it the function $v : \G'\to \R$ as

\begin{equation}
\label{utilde}
 v (x) : = \left\{
\begin{array}{cc}
    u (x), & x \in \mathcal K  \\
     u_{\widetilde \i} (x), & x \in [0, \widetilde x] \subset \mathcal H_{\widetilde \i} \\
     u^*(x - \widetilde x), & x \in (\widetilde x, + \infty) \subset \mathcal{H}_{\widetilde \i}.
\end{array}
\right.
\nonumber
\end{equation}
%where $u^*$ denotes the monotone rearrangement on $[0, + \infty)$ of the
%functions $u_i$ with $ i \neq \widetilde \i$ and $u_{\widetilde \i} (\cdot - \widetilde x)$.

From Proposition \ref{rearrangements} the monotone rearrangement preserves the $L^p$-norms (see identity Eq \eqref{equimeas}), 
then
\begin{equation}
    \label{rearlp} \begin{split}
    \| v \|^r_{L^r (\G')} = & \| v \|^r_{L^r (\mathcal K)}
+  \| v \|^r_{L^r (\mathcal H_{\wi})} 
%\\
%= &
= \| v \|^r_{L^r (\mathcal K)}
+  \int_0^{\widetilde x} | v_{\wi} |^r dx +  \int_{\widetilde x}^{+\infty} | v_{\wi} |^r dx \\ = &
\| u \|^r_{L^r (\mathcal K)}
+  \int_0^{\widetilde x} | u_{\wi} |^r dx +  \int_{\widetilde x}^{+\infty} | u^* (x- \widetilde x) |^r dx 
\\ = &
\| u \|^r_{L^r (\mathcal K)}
+  \int_0^{\widetilde x} | u_{\wi} |^r dx +  
\sum_{i \neq \wi} \| u \|^r_{L^r (\mH_i)} +
\int_{\widetilde x}^{+\infty} | u_{\wi} |^r dx 
\\ = &
\| u \|^r_{L^r (\mathcal K)},
\end{split}
\end{equation}
for every $r \in [1, + \infty].$ In particular, for $r = 2$ one has 
$ \| v \|^2_{L^2 (\G')} = \mu.$

Now, due to Eq \eqref{rearlp} and to $u \equiv v$ on $\mathcal K$, 
the difference  $E (u, \G) - E (v, \G')$ reduces to $T(u) -T(v)$
outside $\mathcal K,$ i.e., on the halflines only. Thus
\begin{equation}
    \label{final} \begin{split} &
E (u, \G) - E (v, \G') \\ = &  
 \sum_{i \neq \wi} \| u'_i \|^2_{L^2 (\mathcal H_{i})}
 +  \int_0^{\widetilde x} | u'_{\wi} |^2 dx + 
 \int_{\widetilde x}^{+\infty} | u'_{\wi} |^2 dx
 -  \int_0^{\widetilde x} | v'_{\wi} |^2 dx - 
 \int_{\widetilde x}^{+\infty} | v'_{\wi} |^2 dx \\
= &  
 \sum_{i \neq \wi} \| u'_i \|^2_{L^2 (\mathcal H_{i})}
 +  
 \int_{\widetilde x}^{+\infty} | u'_{\wi} |^2 dx
 -  
 \int_{\widetilde x}^{+\infty} | v'_{\wi} |^2 dx
\\
= &  
 \sum_{i \neq \wi} \| u'_i \|^2_{L^2 (\mathcal H_{i})}
 +  
 \int_{\widetilde x}^{+\infty} | u'_{\wi} |^2 dx
 -  
 \int_{\widetilde x}^{+\infty} | (u^*)' (x - \widetilde x) |^2 dx
 \\
 \geq & \ 0,
  \end{split}
\end{equation}  
where in the last passage we used inequality Eq \eqref{polyaszego}.

Then, since $u$ is supposed to be a Ground State for $E (\cdot, \G)$, by Eq \eqref{final} it must be 
$$E (v, \G') \leq E (u, \G) \leq - \theta_p \mu^{2 \beta + 1},$$
therefore by the existence criterion there exists a Ground State for
$E (\cdot, \G')$ at mass $\mu,$ that contradicts the present proof in the case $n = 1.$
This concludes the proof.
\end{proof}

\begin{remark}{\em 
As an application of Theorem \ref{th:nonex} we consider the graph
$\mathcal{G}$
made of one halfline and a compact core $\mathcal{K}$ consisting of $n$ edges $e_i$, $i=1, \dots n,$ each of length $l$, all attached at the origin of the halfline (see Fig. \ref{nfork}).
Thus  ${\rm{diam}}({\mathcal{K}}) = 2 l$ and $|{\mathcal{K}}| = nl$.

Moreover, we take in consideration a potential $w$ supported on the edges $e_i$ and defined
as 
$$w_i (x) : = \frac{\epsilon^3}{4l^{2k +2}} x^{2 k}, \quad i =1, \dots ,n, \ x \in [0,l], \ k \in \mathbb{N},$$
where $w_i$ denotes the restriction of $w$ on the edge $e_i.$
Obviously $w \geq 0$ and $\| w \|_\infty = \frac{\epsilon^3}{4 l^2}$. 

The condition \eqref{theor} rewrites then as
$$ \epsilon > \max \left( 2 \mu^\beta l, \, \frac 1 {\mu^\beta n l}, \,
\frac{\epsilon^3}{4 l^2\mu^{2 \beta}} \right),$$
that, by a straightforward computation, amounts to
\begin{equation}
\label{muchoice}
     \frac{1}{nl\epsilon} < \mu^\beta < \frac{\epsilon}{2 l}.
     \end{equation}
     
The inequalities Eq \eqref{muchoice} can be simultaneously satisfied for $n$ large enough. In other words, if $n$ is large enough, then
there exists an interval of masses to which Theorem \ref{th:nonex} applies.

This example shows that Theorem \ref{th:nonex} is not empty, in the sense that for some graphs the condition Eq \eqref{theor} singles out a significant interval of masses.

At present, we do not have a precise estimate of  the constant $\epsilon.$ It will be the subject of further investigation.
     
}
\end{remark}

\begin{figure}[h!]
 \centering
 \includegraphics[height = 4 cm ]{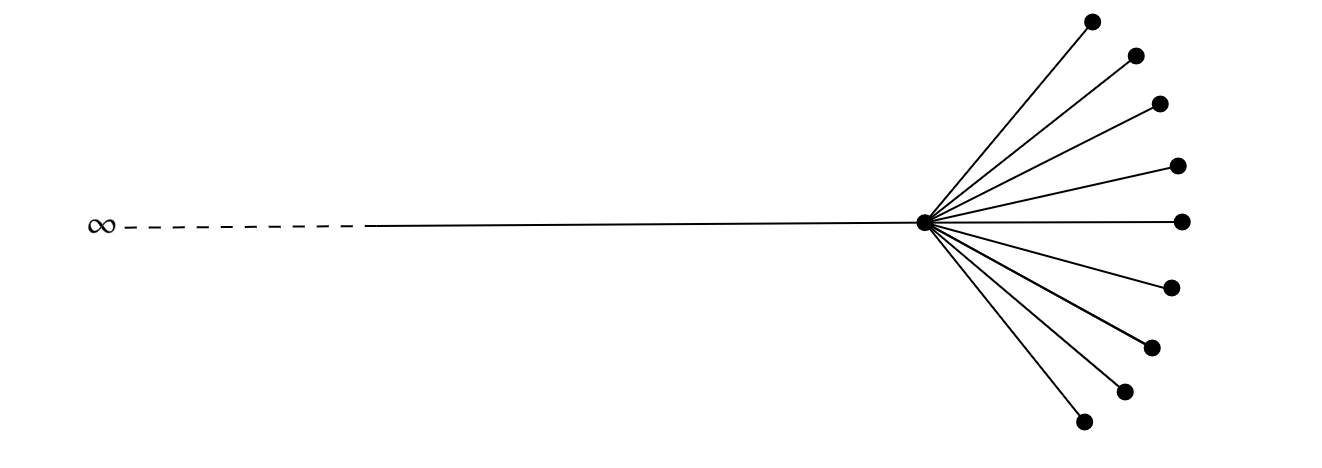}
        \caption{A n-fork graph consisting of one halfline and $n$ edges of length $l$.}\label{nfork}
\end{figure}

\section*{Use of AI tools declaration}
The authors declare they have not used Artificial Intelligence (AI) tools in the creation of this article.

\section*{Acknowledgments}

R. Adami acknowledges that this study was carried out within the project E53D23005450006 ``Nonlinear dispersive equations in presence of singularities” funded by European Union-Next Generation EU within the PRIN 2022 program (D.D. 10402/02/2022 Ministero dell’Università e della Ricerca). This manuscript reflects only the authors’ views and opinions and the Ministry cannot be considered responsible for them. %\\
R. Adami has been partially supported by the INdAM Gnampa 2023 project ``Modelli nonlineari in presenza di interazioni puntuali”. %\\
D. Spitzkopf has been partially supported by the European Union's Horizon 2020 research and innovation programme under the Marie Sk$\l$odowska-Curie grant agreement No 873071. 

\section*{Conflict of interest}

The authors declare there is no conflicts of interest.

%\section*{ Authors contribution }

%All authors participate at the same level in ideas and writing.

\end{document}